\newcommand{\N}{\mathbb{N}}
\newcommand{\Z}{\mathbb{Z}}
\newcommand{\R}{\mathbb{R}}
\newcommand{\fr}{\mathfrak}
\newcommand{\goth}{\mathfrak}
\newcommand{\dist}{\text{dist}}
\newtheorem{theorem}{Theorem}[section]
\newtheorem{lemma}[theorem]{Lemma}
\newtheorem{prop}[theorem]{Proposition}
\newtheorem{cor}[theorem]{Corollary}
\newtheorem{remark}[theorem]{Remark}
\numberwithin{equation}{section}
\begin{document}

\title{Simple mixing actions with uncountably many prime factors}

\author[A. I. Danilenko]{Alexandre I. Danilenko}
\address{Institute for Low Temperature Physics \& Engineering of the National Academy of Sciences of Ukraine, 47 Lenin Ave., Kharkiv, 61103, UKRAINE}
\email{alexandre.danilenko@gmail.com}

\author[A. V. Solomko]{Anton V. Solomko}
\email{solomko.anton@gmail.com}

\subjclass[2010]{37A05, 37A10}

\begin{abstract}
Via $(C,F)$-construction we produce a 2-fold simple mixing transformation which has uncountably many non-trivial proper factors and all of them are prime.
\end{abstract}

\maketitle

\section{Introduction}
This paper is about prime factors of simple probability preserving actions.
We first recall these and related definitions from the theory of joinings.

Let $T=(T_g)_{g\in \Gamma}$ be an ergodic action of a locally compact second countable group $\Gamma$ on a standard probability space $(X,\mathfrak B,\mu)$.
The main interest for us lies in $\Z$ and $\R$-actions.
A measure $\lambda$ on $X\times X$ is called a \emph{2-fold self-joining} of $T$ if it is $(T_g\times T_g)_{g\in\Gamma}$-invariant and it projects onto $\mu$ on both coordinates.
Denote by $J_2^e(T)$ the set of all ergodic 2-fold self-joinings of $T$.
Let $C(T)$ stand for the centralizer of $T$, i.e.\ the set of all $\mu$-preserving invertible transformations of $X$ commuting with $T_g$ for each $g\in \Gamma$.
Given a transformation $S\in C(T)$, we denote by $\mu_S$ the corresponding \emph{off-diagonal measure} on $X\times X$ defined by $\mu_S(A\times B) := \mu(A\cap S^{-1}B)$ for all $A,B\in\mathfrak B$.
In other words, $\mu_S$ is the image of the measure $\mu$ under the map $x\mapsto(x,Sx)$.
Of course, $\mu_S\in J_2^e(T)$ for every $S\in C(T)$.
If $T$ is weakly mixing, $\mu\times\mu$ is also an ergodic self-joining.
If $J_2^e(T) \subset \{\mu_S\mid S\in C(T)\}\cup\{\mu\times\mu\}$ then $T$ is called \emph{2-fold simple} \cite{Ve}, \cite{dJR}.
By a \emph{factor} of $T$ we mean a non-trivial proper $T$-invariant sub-$\sigma$-algebra of $\mathfrak B$.
If $T$ has no non-trivial proper factors then $T$ is called \emph{prime}.
In \cite{Ve} it was shown that if $T$ is 2-fold simple then for each non-trivial factor $\mathfrak{F}$ of $T$ there exists a compact (in the strong operator topology) subgroup $K_{\fr F}\subset C(T)$ such that $\fr F=\fr F_{K_{\fr F}}$, where
$$
\mathfrak{F}_K = \{ A\in\mathfrak B \mid \mu(kA\bigtriangleup A)=0 \text{ for all } k\in K \}
$$
is the fixed algebra of $K$.
In particular, $\mathfrak{F}$ (or, more precisely, the restriction of $T$ to $\mathfrak{F}$) is prime if and only if $K_\mathfrak{F}$ is a maximal compact subgroup of $C(T)$.

One of the natural questions arising after the general theory of simple actions was developed in \cite{dJR} is: are there simple maps with non-unique prime factors?
The first example of such maps was constructed by Glasner and Weiss \cite{GlW} as an inverse limit of certain horocycle flows.
For that they used some subtle facts from Ratner's theory of joinings for horocycle flows and properties of lattices in $SL_2(\Bbb R)$.
The authors of a later paper \cite{DdJ} utilized a more elementary cutting-and-stacking technique  to construct a weakly mixing 2-fold simple transformation which has countably many factors, all of which are prime.
Our purpose in the present paper is to use a similar cutting-and-stacking technique to produce a {\it mixing} transformation which has {\it uncountably} many factors, all of which are prime.

Via $(C,F)$-construction we produce a measure preserving action $T$ of an auxiliary group $G=\mathbb{Z}\times(\mathbb{R}\rtimes\Z_2)$ such that the transformation $T_{(1,0,0)}$ is mixing 2-fold simple and $C(T_{(1,0,0)}) = \{T_g\mid g\in G\}$.
Since all non-trivial compact subgroups of $G$ are $G_b=\{(0,0,0),(0,b,1)\}$, $0\neq b\in\R$, and all of them are maximal, this gives an example of 2-fold simple transformation with uncountably many prime factors.
All these factors are 2-to-1 and pairwise isomorphic.

We also correct a gap in the proof of \cite[Lemma~2.3(ii)]{DdJ} (see Remark~\ref{gap_rem}).

The skeleton of the proof of the main result is basically the same as in \cite{DdJ}, where the ``discrete case'' (i.e.\ the auxiliary group is discrete) was under consideration.
To work with the $(C,F)$-construction for actions of continuous (i.e.\ non-discrete) groups we use the approximation techniques from \cite{Da_H}.

\section{$(C,F)$-construction}

We now briefly outline the $(C,F)$-construction of measure preserving actions for locally compact groups.
For details see \cite{Da_CF} and references therein.

Let $G$ be a unimodular locally compact second countable (l.c.s.c.)\ amenable group.
Fix a ($\sigma$-finite) Haar measure $\lambda$ on it.
Given two subsets $E,F\subset G$, by $EF$ we mean their algebraic product, i.e. $EF=\{ef \mid e\in E, f\in F\}$.
The set $\{e^{-1} \mid e\in E\}$ is denoted by $E^{-1}$.
If $E$ is a singleton, say $E=\{e\}$, then we will write $eF$ for $EF$.
Given a finite set $A$, $|A|$ will denote the cardinality of $A$.
Given a subset $F\subset G$ of finite Haar measure, $\lambda_F$ will denote the probability on $F$ given by $\lambda_F(A) := \lambda(A)/\lambda(F)$ for each measurable $A\subset F$.
If $D$ is finite, then $\kappa_D$ will denote the equidistributed probability on $D$, that is $\kappa_D(A) := |A|/|D|$ for each $A\subset D$.

To define a $(C,F)$-action of $G$ we need two sequences $(F_n)_{n=0}^\infty$ and $(C_n)_{n=1}^\infty$ of subsets in $G$ such that the following are satisfied:
\begin{align}
& (F_n)_{n=0}^\infty \text{ is a F{\o}lner sequence in $G$}, \label{CF1}\\
& C_n \text{ is finite and } |C_n| > 1,  \label{CF2}\\
& F_n C_{n+1} \subset F_{n+1},  \label{CF3}\\
& F_n c \cap F_n c' = \varnothing \text{ for all } c\neq c'\in C_{n+1}.  \label{CF4}
\end{align}
This means that $F_n C_{n+1}$ consists of $|C_{n+1}|$ mutually disjoint `copies' $F_n c$,
$c\in C_{n+1}$, of $F_n$ and all these copies are contained in $F_{n+1}$.

First, we define a probability space $(X,\mu)$ in the following way.
We equip $F_n$ with the measure $(|C_1|\cdots|C_n|)^{-1}\lambda\upharpoonright F_n$ and endow $C_n$ with the equidistributed probability measure.
Let $X_n := F_n \times \prod_{k>n} C_k$ stand for the product of measure spaces.
Define an embedding $X_n \to X_{n+1}$ by setting
$$
(f_n,c_{n+1},c_{n+2},\ldots) \mapsto (f_n c_{n+1},c_{n+2},\ldots).
$$
It is easy to see that this embedding is measure preserving.
Then $X_1\subset X_2\subset\cdots$.
Let $X:=\bigcup_{n=0}^\infty X_n$ denote the inductive limit of the sequence of measure spaces $X_n$ and let $\goth B$ and $\mu$ denote the corresponding Borel $\sigma$-algebra and measure on $X$ respectively.
Then $X$ is a standard Borel space and $\mu$ is $\sigma$-finite.
It is easy to check that $\mu$ is finite if and only if
\begin{equation} \label{fin}
\lim_{n\to\infty}\frac{\lambda(F_n)}{|C_1|\cdots|C_n|} < \infty.
\end{equation}
If (\ref{fin}) is satisfied then we choose (i.e., normalize) $\lambda$ in such a way that $\mu(X)=1$.

Now we define a $\mu$-preserving action of $G$ on $X$.
Suppose that the following is satisfied:
\begin{equation} \label{CFgap}
\text{for any $g\in G$, there is $m\geqslant 0$ with $gF_n C_{n+1}\subset F_{n+1}$ for all $n\geqslant m$.}
\end{equation}
For such $n$, take $x\in X_n\subset X$ and write the expansion $x=(f_n,c_{n+1},c_{n+2},\ldots)$ with $f_n\in F_n$ and $c_i\in C_i$, $i>n$.
Then we let
$$
T_g x := (gf_nc_{n+1},c_{n+2},\ldots)\in X_{n+1}\subset X.
$$
It follows from (\ref{CFgap}) that $T_g$ is a well defined $\mu$-preserving transformation of $X$.
Moreover, $T_gT_h=T_{gh}$, i.e. $T:=(T_g)_{g\in G}$ is a $\mu$-preserving Borel action of $G$ on $X$.
$T$ is called the \emph{$(C,F)$-action of $G$ associated with $(C_{n+1},F_n)_{n=0}^\infty$}.

We now recall some basic properties of $(X, \goth B, \mu, T)$.
Given a Borel subset $A\subset F_n$, we put
$$
[A]_n := \{x\in X \mid x=(f_n,c_{n+1},c_{n+2},\ldots)\in X_n \text{ and } f_n\in A\}
$$
and call this set an \emph{$n$-cylinder}.
It is clear that the $\sigma$-algebra $\goth B$ is generated by the family of all cylinders.
Given Borel subsets $A,B\subset F_n$, we have
\begin{align}
& [A\cap B]_n = [A]_n \cap [B]_n, [A\cup B]_n = [A]_n \cup [B]_n, \\
& [A]_n = [AC_{n+1}]_{n+1} = \bigsqcup_{c\in C_{n+1}} [Ac]_{n+1}, \\
& \mu([A]_n) = |C_{n+1}| \mu([Ac]_{n+1}) \text{ for every } c\in C_{n+1}, \\
& \mu([A]_n) = \mu(X_n) \lambda_{F_n}(A), \\
& T_g[A]_n = [gA]_n \text{ if } gA\subset F_n, \\
& T_g[A]_n = T_h[h^{-1}gA]_n \text{ if } h^{-1}gA\subset F_n. \label{CFhg}
\end{align}
Each $(C,F)$-action is of funny rank one (for the definition see \cite{Fe} for the case of $\Z$-actions and \cite{So} for the general case) and hence ergodic.
It also follows from (\ref{CF2}) that $T$ is conservative.

\section{Main result}

By $\Z_n$ we denote a cyclic group of order $n$, i.e. $\Z_n=\Z/n\Z=\{0,1,\ldots,n-1\}$.
Let $G:=\Z\times(\R\rtimes\Z_2)$ with multiplication law as follows
$$
(x,a,n)(y,b,m) := (x+y,a+(-1)^n b,n+m).
$$
Then the center $C(G)$ of $G$ is $\Z\times\{0\}\times\{0\}$.
Each compact subgroup of $G$ coincides with $G_b=\{(0,0,0),(0,b,1)\}$ for some $b\in\R$.
Notice that $G_b$ is a maximal compact subgroup of $G$ if $b\neq 0$.

To construct the required $(C,F)$-action of $G$ we will determine a sequence $(C_{n+1},F_n)_{n=0}^\infty$.
Let $(r_n)_{n=0}^\infty$ be an increasing sequence of positive integers such that
\begin{equation}
\lim_{n\to\infty}\frac{n^4}{r_n}=0.
\end{equation}
Below --- just after Lemma~\ref{dJlemma} --- one more restriction on the growth of $(r_n)_{n=0}^\infty$ will be imposed and we will assume that $r_n$ is large so that (\ref{dJlem}) is satisfied.
We define recurrently three other sequences $(\widetilde{a}_n)_{n=0}^\infty$, $(a_n)_{n=1}^\infty$ and $(b_n)_{n=1}^\infty$ of positive integers by setting
\begin{align*}
\widetilde{a}_0 &:= 1, \\
a_n &:= (2r_{n-1}+1)\widetilde{a}_{n-1} \text{ for $n\geqslant 1$}, \\
b_n &:= (2n-1)\widetilde{a}_{n-1} \text{ for $n\geqslant 1$}, \\
\widetilde{a}_n &:= a_n + b_n + n \text{ for $n\geqslant 1$}.
\end{align*}
For each $n\in\N$, we let
\begin{align*}
I_n &:= \{-n,\ldots,n\}^2\subset\Z^2, \\
H_n &:= \{-r_n,\ldots,r_n\}^2\subset\Z^2, \\
F_n &:= (-a_n,a_n]_\Z \times (-a_n,a_n]_\R \times \Z_2 , \\
S_n &:= (-b_n,b_n]_\Z \times (-b_n,b_n]_\R \times \Z_2 , \\
\widetilde{F}_n &:= (-\widetilde{a}_n,\widetilde{a}_n]_\Z \times (-\widetilde{a}_n,\widetilde{a}_n]_\R \times \Z_2 .
\end{align*}
We also consider a homomorphism $\phi_n\colon \Z^2 \to G$ given by
$$
\phi_n(i,j) := (2i\widetilde{a}_n, 2j\widetilde{a}_n, 0).
$$
We then have
\begin{equation}
S_n\subset F_n, \quad F_n S_n = S_n F_n \subset \widetilde{F}_n \subset G,
\end{equation}
\begin{equation}
S_{n+1} = \widetilde{F}_n \phi_n(I_n) = \bigsqcup_{h\in I_n} \widetilde{F}_n \phi_n(h) = \bigsqcup_{h\in I_n} \phi_n(h) \widetilde{F}_n,
\end{equation}
\begin{equation}
F_{n+1} = \widetilde{F}_n \phi_n(H_n) = \bigsqcup_{h\in H_n} \widetilde{F}_n \phi_n(h) = \bigsqcup_{h\in H_n} \phi_n(h) \widetilde{F}_n,
\end{equation}

Suppose also that $F_n$ is equipped with a finite partition $\xi_n$ such that the following are satisfied:
\begin{enumerate}
\item[(i)] the diameter of each atom of $\xi_n$ is less than $\tfrac 1 n$,
\item[(ii)] for each atom $A\in\xi_{n-1}$ and each element $c\in C_n$, the subset $Ac\subset F_n$ is $\xi_n$-measurable and
\item[(iii)] $\xi_n$ is symmetric, that is $A^{-1}\in\xi_n$ whenever $A\in\xi_n$.
\end{enumerate}
It follows that for each measurable subset $A\subset F_n$, any $\varepsilon>0$ and for all $k$ large enough, there is a $\xi_k$-measurable subset $B\subset F_k$ such that $\mu([A]_n \bigtriangleup [B]_k)<\varepsilon$.
We will denote by $\sigma(\xi_n)$ the $\sigma$-algebra on $F_n$ generated by $\xi_n$.

For a finite subset $D$ in $S_n$, we denote by $\kappa_D$ the corresponding normalized Dirac comb,
i.e.\ a measure on $S_n$ given by $\kappa_D(A):=\dfrac{|A\cap D|}{|D|}$ for each subset $A\subset S_n$.
Given two subsets $A,B\subset F_n$ define a function $f_{A,B}\colon S_n\times S_n \to \R$ by setting $f_{A,B}(x,y) := \dfrac{\lambda(Ax\cap By)}{\lambda(F_n)}$, $x,y\in S_n$.
Choose a finite subset $D_n$ in $S_n$ such that
\begin{equation} \label{discr_meas}
\left| \int_{S_n\times S_n} f_{Ag,Bh} d\kappa_{D_n}d\kappa_{D_n} - \frac{1}{\lambda(S_n)^2}\int_{S_n\times S_n} f_{Ag,Bh} d\lambda d\lambda \right| < \frac{1}{n}
\end{equation}
for each $\xi_n$-measurable subsets $A,B\subset F_n$ and any $g,h\in F_n$ with $AgS_n,BhS_n\subset F_n$.
For instance, let $\xi_n$ consists of `rectangles' $\{a\}\times \Delta\times \{m\} \subset G$, where $a\in (-a_n,a_n]_{\Z}$, $m\in\Z_2$ and $\Delta\subset (-a_n,a_n]_{\R}$ is a subinterval of length $n^{-1}$, and set $D_n:=\{ (a,kn^{-2},m) \mid a\in(-b_n,b_n]_{\Z}, k\in (-n^2 b_n,n^2 b_n]_{\Z}, m\in\Z_2 \}$.
It is an easy exercise to check that (\ref{discr_meas}) is satisfied for such $\xi_n$ and $D_n$.
We notice also that $|D_n^0| = |D_n^1|$.

Given a finite (signed) measure $\nu$ on a finite set $D$, we let $\|\nu\|_1 := \sum_{d\in D}|\nu(d)|$.
If $\pi\colon D\to E$ then clearly $\|\nu\circ\pi^{-1}\|_1 \leqslant \|\nu\|_1$.
Given a finite set $Y$ and a mapping $s\colon Y\to D$, let $\dist_{y\in Y}s(y)$ denote the image of the equidistribution on $Y$ under $s$:
$$
\dist_{y\in Y}s(y) := \frac{1}{|Y|}\sum_{y\in Y} \delta_{s(y)} = \kappa_D\circ s^{-1}.
$$

The following lemma easily follows from \cite[Lemma~2.1]{dJ} (cf. \cite[Lemma~3.2]{Da_H}).
\begin{lemma}\label{dJlemma}
Let $D$ be a finite set.
Then given $\varepsilon>0$ and $\delta>0$, there is $R\in\N$ such that for each $r>R$, there exists a map $s\colon \{-r,\ldots,r\}^2 \to D$ such that
$$
\left\| \dist_{0\leqslant t <N} (s_n(h+(t,0)),s_n(h'+(t,0))) - \kappa_D\times \kappa_D \right\|_1 < \varepsilon
$$
for each $N>\delta r$ and $h\neq h'\in \{-r,\ldots,r\}^2$ with $h_1+N<r$ and $h'_1+N<r$.\footnote{Here and below by $a\neq b\in A$ we denote two different elements $a$, $b$ of a set $A$.}
\end{lemma}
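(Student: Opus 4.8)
The plan is to use the probabilistic method, essentially reproducing the argument of \cite[Lemma~2.1]{dJ}: I choose the values $s(v)$, $v\in\{-r,\dots,r\}^2$, to be independent and uniformly distributed in $D$, and then show that for $r$ large the resulting random map $s$ satisfies the displayed estimate \emph{simultaneously} for all admissible triples $(h,h',N)$ with positive probability. The number of such triples is $O(r^5)$ (each of $h,h'$ ranges over $(2r+1)^2$ points and $N$ over at most $2r$ values), so by the union bound it suffices to prove that, for each fixed admissible $(h,h',N)$, the event
\[
\Bigl\|\dist_{0\leqslant t<N}\bigl(s(h+(t,0)),s(h'+(t,0))\bigr)-\kappa_D\times\kappa_D\Bigr\|_1\geqslant\varepsilon
\]
has probability at most $\exp(-c\varepsilon^2\delta r)$, where $c=c(|D|)>0$, for all $r$ larger than some $R=R(\varepsilon,\delta,|D|)$.

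First I would note that for every $t<N$ the two sampling points $h+(t,0)$ and $h'+(t,0)$ are distinct, since their difference is $h-h'\neq0$; hence the pair $\bigl(s(h+(t,0)),s(h'+(t,0))\bigr)$ is uniform on $D\times D$ and the random measure $\nu:=\dist_{0\leqslant t<N}\bigl(s(h+(t,0)),s(h'+(t,0))\bigr)$ has expectation exactly $\kappa_D\times\kappa_D$. Next, for a fixed $(a,b)\in D\times D$ I would bound the variance of $\nu(\{(a,b)\})$: the contributions indexed by $t$ and $t'$ are independent unless $t=t'$ or ($h_2=h_2'$ and $|t-t'|=|h_1-h_1'|$), so each contribution is correlated with at most two others and $\mathrm{Var}\,\nu(\{(a,b)\})\leqslant 3/(4N)$. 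Summing over $(a,b)$ and applying Jensen's inequality gives $\mathbb E\bigl\|\nu-\kappa_D\times\kappa_D\bigr\|_1\leqslant |D|^2/\sqrt{N}\leqslant|D|^2/\sqrt{\delta r}$, which is $<\varepsilon/2$ once $r$ is large, uniformly over $(h,h',N)$.

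Finally, the deviation of $\bigl\|\nu-\kappa_D\times\kappa_D\bigr\|_1$ above its mean is controlled by McDiarmid's bounded-difference inequality applied to the at most $2N$ coordinates $s(v)$ that actually enter $\nu$: a given point $v$ can equal $h+(t,0)$ for at most one $t$ and $h'+(t',0)$ for at most one $t'$, so changing $s(v)$ perturbs at most two of the $N$ pairs and hence changes $\bigl\|\nu-\kappa_D\times\kappa_D\bigr\|_1$ by at most $4/N$. This yields $\mathbb P\bigl(\|\nu-\kappa_D\times\kappa_D\|_1\geqslant\mathbb E\|\nu-\kappa_D\times\kappa_D\|_1+\varepsilon/2\bigr)\leqslant\exp(-c\varepsilon^2 N)\leqslant\exp(-c\varepsilon^2\delta r)$, which together with the mean bound of the previous paragraph is the required estimate; the union bound then produces, for every $r\geqslant R$, a deterministic map $s$ with the desired property.

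The step I expect to require the most care is the case $h_2=h_2'$, where the two windows lie in one horizontal line and overlap in $N-|h_1-h_1'|$ points, so the sampled pairs form only a $|h_1-h_1'|$-dependent sequence whose range of dependence may be a constant multiple of $N$ (one only knows $|h_1-h_1'|<2r-N$). This rules out any naive ``$m$-dependent law of large numbers'' with bounded $m$; the point is that the bounded-difference estimate is insensitive to the size of the gap, since altering one coordinate still disturbs at most two of the $N$ pairs no matter how far apart the windows are. This statement is a two-dimensional variant of \cite[Lemma~2.1]{dJ}, and the argument above is just the probabilistic scheme of that lemma carried out in the plane.
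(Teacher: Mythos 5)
Your argument is correct. Note, however, that the paper contains no proof of this lemma at all: it is stated as following ``easily'' from \cite[Lemma~2.1]{dJ}, which is the one-dimensional analogue of the same statement and is itself proved by exactly the probabilistic scheme you describe (random i.i.d.\ values, a second-moment estimate, concentration, and a union bound over the polynomially many admissible triples). So what you have written is in substance a self-contained two-dimensional rerun of the argument behind the citation rather than a deduction from the cited one-dimensional lemma; both routes are legitimate, and yours has the advantage of making the paper's appeal to \cite{dJ} verifiable on the spot. The mechanics all check: the pair $\bigl(s(h+(t,0)),s(h'+(t,0))\bigr)$ is uniform on $D\times D$ because $h\neq h'$ keeps the two sampling points distinct for every $t$; your accounting of dependencies (a given $t$ interacts only with $t\pm(h_1-h_1')$, and only when $h_2=h_2'$) yields the $O(1/N)$ variance and hence $\mathbb{E}\|\nu-\kappa_D\times\kappa_D\|_1\leqslant|D|^2/\sqrt{\delta r}$; the bounded-difference constant $4/N$ is right since a grid point occurs at most once in each of the two windows; and $O(r^5)e^{-c\varepsilon^2\delta r}\to0$ closes the union bound. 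Your closing observation is also the correct diagnosis of the one delicate case: when $h_2=h_2'$ the two windows overlap and the sampled pairs are only $|h_1-h_1'|$-dependent with a gap that can be comparable to $N$, so no fixed-range dependence argument applies, but the bounded-difference inequality does not care. The only point to make explicit is the choice of $R$: it must be large enough that both $|D|^2/\sqrt{\delta R}<\varepsilon/2$ and $(2R+1)^4\cdot 2R\cdot e^{-c\varepsilon^2\delta R}<1$, which is immediate since both quantities are eventually monotone in $r$.
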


Applying this Lemma with $\varepsilon=\frac{1}{n}$ and $\delta = \frac{1}{n^2}$ we get the following.
If $r_n$ is large enough then there is a mapping $s_n\colon H_n \to D_n$ such that for any $N>\dfrac{r_n}{n^2}$ and $h\neq h'\in H_n\cap (H_n - (N-1,0))$ we have
\begin{equation} \label{dJlem}
\left\| \dist_{0\leqslant t <N} (s_n(h+(t,0)),s_n(h'+(t,0))) - \kappa_{D_n}\times \kappa_{D_n} \right\|_1 < \frac 1 n.
\end{equation}
From now on we will assume that $r_n$ is large so that this condition is satisfied and for each $n$ fix $s_n\colon H_n \to D_n$ satisfying (\ref{dJlem}).

Now we define a map $c_{n+1}\colon H_n \to G$ by setting $c_{n+1}(h) := s_n(h)\phi_n(h)$.
We put $C_{n+1} := c_{n+1}(H_n)$.

The reader should have the following picture in mind.
The set $F_{n+1}$ is exactly tiled with the sets $\widetilde{F}_n \phi_n(h)$, $h\in H_n$, which may be thought of as `windows'.
Each $F_n$ has a `natural' translate $F_n \phi_n(h)$ in $\widetilde{F}_n \phi_n(h)$ but the translate we actually choose is the natural translate perturbed by a further translation $s_n(h)$ which is chosen in a `random' way and does not move $F_n \phi_n(h)$ out of its window.

It is easy to derive that (\ref{CF1})--(\ref{CFgap}) are satisfied for the sequence $(F_n,C_{n+1})_{n=0}^\infty$.
Hence the associated $(C,F)$-action $T=(T_g)_{g\in G}$ of $G$ is well defined on a standard probability space $(X,\fr B,\mu)$.

We now state the main result.

\begin{theorem} \label{MainTh}
The transformation $T_{(1,0,0)}$ is mixing and 2-fold simple.
All non-trivial proper factors of $T_{(1,0,0)}$ are of the form $\fr F_{G_b}$, $0\neq b\in\R$.
All these factors are 2-to-1, prime and pairwise isomorphic.
\end{theorem}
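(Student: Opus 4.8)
The plan is to prove the assertions in this order: (a) $T_{(1,0,0)}$ is mixing; (b) every ergodic $2$-fold self-joining of $T_{(1,0,0)}$ is either $\mu\times\mu$ or an off-diagonal $\mu_{T_g}$, $g\in G$ — which yields at once that $T_{(1,0,0)}$ is $2$-fold simple and that $C(T_{(1,0,0)})=\{T_g\mid g\in G\}$; and (c) the description of the factors, by feeding (b) into Veech's theorem (quoted in the Introduction) together with the elementary classification of compact subgroups of $G$.

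For (a) I would reduce to cylinders: it suffices to show $\mu(T_{(k,0,0)}[A]_n\cap[B]_n)\to\mu([A]_n)\mu([B]_n)$ as $k\to\infty$ for $\xi_n$-measurable $A,B\subset F_n$, and this is the standard Ornstein-type mechanism. For large $k$, work at a level $m=m(k)$ (with $m\to\infty$ as $k\to\infty$) chosen so that $k$ is comparable to several of the ``windows'' $\widetilde F_{m-1}\phi_{m-1}(h)$, $h\in H_{m-1}$, tiling $F_m$, while most of these windows stay inside $F_m$ after translation by $(k,0,0)$ — the remaining boundary fraction being $o(1)$ by the growth hypotheses on $(r_n)$. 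Expanding $[A]_n,[B]_n$ into $m$-cylinders spread over the windows and moving the windows by $(k,0,0)$ via \eqref{CFhg}, the overlap becomes an average over horizontal strips of windows of quantities governed by the relative spacer displacements $s_{m-1}(h)$ versus $s_{m-1}(h+(t,0))$; \eqref{dJlem} says precisely that these pairs equidistribute like $\kappa_{D_{m-1}}\times\kappa_{D_{m-1}}$, which forces the average to the product. (In the continuous $\R$-direction, sums over $D_{m-1}$ are replaced by $\lambda$-integrals; \eqref{discr_meas} was set up so the two agree.) Hence $T_{(1,0,0)}$ is mixing, in particular weakly mixing, so $\mu\times\mu$ is ergodic.

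Step (b) is the crux, and the hardest point. Let $\lambda\in J_2^e(T_{(1,0,0)})$ with $\lambda\neq\mu\times\mu$. Following the skeleton of \cite{DdJ} and the $(C,F)$-approximation techniques for non-discrete acting groups from \cite{Da_H}, I would analyse $\lambda$ on products $[A]_n\times[B]_n$ of cylinders: using $[A]_n=\bigsqcup_{h\in H_n}[As_n(h)\phi_n(h)]_{n+1}$, the $T_{(1,0,0)}$-invariance of $\lambda$, and \eqref{CFhg} to move the windows of the two coordinates independently by horizontal translations, the pairwise-independence estimate \eqref{dJlem} should produce a dichotomy. Either the joint window-to-window distribution becomes asymptotically independent along such shifts, forcing $\lambda=\mu\times\mu$ (excluded), or it is rigid: for each large $n$ there is $g_n\in G$ with $\lambda$ agreeing with the off-diagonal $\mu_{T_{g_n}}$ on $n$-cylinders up to an error tending to $0$. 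Since $\lambda\neq\mu\times\mu$, the $g_n$ cannot escape to infinity in $G$ (otherwise, by mixing, $\mu_{T_{g_n}}\to\mu\times\mu$ weakly, so $\lambda=\mu\times\mu$); the discrete coordinates of $g_n$ stabilise and the $\R$-coordinate converges, so along a subsequence $g_n\to g\in G$ and $\lambda=\mu_{T_g}$ (ergodicity of $\lambda$ ruling out a non-trivial mixture of distinct off-diagonals). Hence $J_2^e(T_{(1,0,0)})\subset\{\mu_{T_g}\mid g\in G\}\cup\{\mu\times\mu\}$, so $T_{(1,0,0)}$ is $2$-fold simple; and since each $S\in C(T_{(1,0,0)})$ gives an off-diagonal $\mu_S\in J_2^e(T_{(1,0,0)})$ (singular to $\mu\times\mu$), necessarily $\mu_S=\mu_{T_g}$, hence $S=T_g$ — which, with the obvious inclusion $\{T_g\}\subset C(T_{(1,0,0)})$ (as $(1,0,0)$ is central in $G$), gives $C(T_{(1,0,0)})=\{T_g\mid g\in G\}$. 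The genuine difficulties are: keeping the boundary and approximation errors uniform so that ``rigidity'' truly identifies an element of $G$ and not merely an abstract limiting joining; handling the continuous $\R$-direction, where the limiting displacement may be any real number and one must invoke \cite{Da_H}; and verifying that the $(C,F)$-action $T=(T_g)_{g\in G}$ is free, so that $g\mapsto T_g$ is injective (in particular $T_{(0,c,0)}\neq\mathrm{id}$ for $c\neq0$).

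Finally, for (c): by Veech's theorem every non-trivial factor of $T_{(1,0,0)}$ is $\fr F_K$ for a compact subgroup $K\subset C(T_{(1,0,0)})=\{T_g\mid g\in G\}$, and is prime iff $K$ is maximal compact. As $g\mapsto T_g$ is a continuous bijective homomorphism of Polish groups it is a topological isomorphism, so compact subgroups of $C(T_{(1,0,0)})$ correspond to those of $G$; and a compact subgroup of $G$ projects trivially to $\Z$, meets $\{0\}\times\R\times\{0\}$ only in the identity, and maps into $\Z_2$, hence has order at most $2$ and is $\{e\}$ or some $G_b$, $b\in\R$ — each $G_b$, being of order $2$, is then maximal compact. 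For $b=0$ the algebra $\fr F_{G_0}$ is trivial because $T_{(0,0,1)}$ is ergodic (this uses that the $D_n$ contain equally many points with $\Z_2$-coordinate $0$ and $1$, making the reflection part of the dynamics; it is checked by a cylinder argument parallel to (a)). So the non-trivial proper factors of $T_{(1,0,0)}$ are exactly the $\fr F_{G_b}$ with $0\neq b\in\R$. Each is prime ($G_b$ maximal compact) and $2$-to-$1$: $|G_b|=2$ and the involution $T_{(0,b,1)}$ is free, since its fixed-point set is invariant under $T_{(1,0,0)}$, hence $\mu$-null by ergodicity of $T_{(1,0,0)}$, and it is not conull because $T_{(0,b,1)}\neq\mathrm{id}$. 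Pairwise isomorphism follows from $(0,c,0)G_b(0,c,0)^{-1}=G_{b+2c}$, $c\in\R$: as $T_{(0,c,0)}\in C(T_{(1,0,0)})$ commutes with $T_{(1,0,0)}$ and preserves $\mu$, it carries $\fr F_{G_b}$ isomorphically onto $\fr F_{G_{b+2c}}$, and letting $c$ range over $\R$ shows all the $\fr F_{G_b}$, $0\neq b$, are mutually isomorphic.
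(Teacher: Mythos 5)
Your overall architecture (mixing, then the joining dichotomy, then Veech) matches the paper, and your part (a) is a fair sketch of the actual mixing proof. The genuine gap is in part (b), which you yourself identify as the crux but leave as a plan with the central mechanism missing. You assert that \eqref{dJlem} ``should produce a dichotomy'' at the level of the joining measure on cylinder products --- either asymptotic independence or the existence of elements $g_n\in G$ with $\nu$ close to $\mu_{T_{g_n}}$ on $n$-cylinders --- and then run a compactness argument on the $g_n$. But nothing in your sketch produces those $g_n$, and the ``rigidity'' alternative is exactly what has to be proved. The paper's argument is pointwise, not measure-theoretic in your sense: one first builds a F{\o}lner sequence $\Phi_n=K_n+2\widetilde{a}_nJ_n$ in $\Z$ satisfying Shulman's condition, so that Lindenstrauss's pointwise ergodic theorem applies to the ergodic joining $\nu$ and gives a $\nu$-generic pair $(x,y)$; one then expands $x$ and $y$ in $(C,F)$-coordinates, $x=(f_n,c_{n+1}(h_n),\dots)$, $y=(f_n',c_{n+1}(h_n'),\dots)$ with $h_n,h_n'\in H_n^-$, and the dichotomy is on these \emph{expansion coordinates}: if $h_n\neq h_n'$ for infinitely many $n$, the equidistribution \eqref{dJlem} together with \eqref{discr_meas} and Lemma~\ref{mainLem}(ii) forces $\nu=\mu\times\mu$; if $h_n=h_n'$ eventually, then $y=T_kx$ for an explicit $k=f_N'f_N^{-1}\in G$ and $\nu=\mu_{T_k}$. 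This concrete source of the dichotomy --- and the ergodic-theorem-along-a-tempered-F{\o}lner-sequence input needed to even speak of generic points --- is the idea your proposal is missing; without it the ``difficulties'' you list in (b) are not technicalities but the whole proof.

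There is also a concrete error in your part (c): you dispose of $b=0$ by claiming that $\fr F_{G_0}$ is trivial ``because $T_{(0,0,1)}$ is ergodic.'' The transformation $T_{(0,0,1)}$ is a nontrivial involution of a nonatomic probability space, so it cannot be ergodic: its fixed $\sigma$-algebra is the (nontrivial, proper) algebra of the 2-to-1 orbit quotient. The correct way to see that $b=0$ contributes nothing new is via conjugacy: $(0,c,0)G_b(0,c,0)^{-1}=G_{b+2c}$, so \emph{all} the $G_b$ (including $G_0$) are conjugate and the corresponding factors are pairwise isomorphic --- which is exactly how the paper deduces the isomorphism statement from \cite[Corollary~3.3]{dJR}. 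The rest of your part (c) (classification of compact subgroups of $G$, primeness from maximality, freeness of the involutions from ergodicity of $T_{(1,0,0)}$) is sound, modulo the faithfulness of the action, which you correctly flag as needing verification.
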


We first prove some technical lemmata.
After that in Proposition~\ref{mixingProp} we show mixing for $T_{(1,0,0)}$ and in Proposition~\ref{simpl} we prove simplicity and describe the centralizer of $T_{(1,0,0)}$.
The structure of factors follows then from Veech's theorem.

Denote by $G^0$ the subgroup $\Z\times\R\times\{0\}$ of index 2 in $G$.
Given any subset $A$ in $G$ we set $A^0 := A\cap G^0$ and $A^1 := A\setminus A^0$.
We will refer to $A^0$ and $A^1$ as `\emph{levels}' of $A$.
We will say that a subset $A\subset G$ is \emph{$\varepsilon$-balanced} if $$|\lambda(A^0) - \lambda(A^1)| < \varepsilon \lambda(A).$$
Denote by $\pi_3\colon G\to\Z_2 $ a natural projection on the third coordinate.
Since $\kappa_{D_n}\circ\pi_3^{-1} = \kappa_{\Z_2 }$, it follows from (\ref{dJlem}) that
\begin{equation} \label{balanced}
\| \dist_{h\in H_n} \pi_3\circ s_n(h) - \kappa_{\Z_2 } \|_1 < \frac{1}{n}.
\end{equation}
In particular, for any $A^* \subset F_n$ the set $A=A^*C_{n+1}$ is $\frac 1 n$-balanced:
\begin{equation} \label{balanced}
|\lambda(A^0) - \lambda(A^1)| < \frac{1}{n} \lambda(A).
\end{equation}
Indeed, since $$A^0 = \bigsqcup_{h\in s_n^{-1}(G^0)} A^{*0} c_n(h) \sqcup \bigsqcup_{h\in s_n^{-1}(G^1)} A^{*1} c_n(h),$$
$$\lambda(A^0) = \lambda(A^{*0})|s_n^{-1}(G^0)| + \lambda(A^{*1})|s_n^{-1}(G^1)|,$$
and similarly
$$\lambda(A^1) = \lambda(A^{*1})|s_n^{-1}(G^0)| + \lambda(A^{*0})|s_n^{-1}(G^1)|.$$
Hence
\begin{align*}
|\lambda(A^0) - \lambda(A^1)| &= |\lambda(A^{*0}) - \lambda(A^{*1})| \left| |s_n^{-1}(G^0)| - |s_n^{-1}(G^1)| \right| \\
&\leqslant \frac{1}{|H_n|} \lambda(A) \left| |s_n^{-1}(G^0)| - |s_n^{-1}(G^1)| \right|.
\end{align*}
It remains to notice that
\begin{align*}
\frac{1}{|H_n|} \left| |s_n^{-1}(G^0)| - |s_n^{-1}(G^1)| \right| &\leqslant \left| \frac{|s_n^{-1}(G^0)|}{|H_n|} - \frac{1}{2} \right| + \left| \frac{|s_n^{-1}(G^1)|}{|H_n|} - \frac{1}{2} \right| \\
&= \| \dist_{h\in H_n} \pi_3\circ s_n(h) - \kappa_{\Z_2 } \|_1 < \frac{1}{n}
\end{align*}
by (\ref{balanced}).
It follows that $A=A^*C_{n+1}$ is $\frac 1 n$-balanced for each $A^* \subset F_n$.

Given $h=(h_1,h_2)\in\Z^2$, we let $h^* := (h_1,-h_2)$.

\begin{lemma} \label{mainLem}
Let $f=f'\phi_{n-1}(h)$ with $f'\in\widetilde{F}_{n-1}$ and $h\in\Z^2$.
\begin{enumerate}
\item[(i)] 
Suppose $f\in G^\alpha$ and let $\beta:=1-\alpha$.
Let
\begin{align*}
L^-_n &:= \widetilde{F}_{n-1}^\alpha \phi_{n-1}(I_{n-2} + h) \sqcup \widetilde{F}_{n-1}^\beta \phi_{n-1}(I_{n-2} + h^*) \text{ and} \\
L^+_n &:= \widetilde{F}_{n-1}^\alpha \phi_{n-1}(I_{n} + h) \sqcup \widetilde{F}_{n-1}^\beta \phi_{n-1}(I_{n} + h^*).
\end{align*}
Then $L^-_n \subset fS_n \subset L^+_n$.
Hence
$$
\frac{\lambda(fS_n\bigtriangleup L^-_n)}{\lambda(S_n)} = \overline{o}(1).
$$

\item[(ii)] 
If, in addition, $fS_n\subset F_n$ then for any subset $A=A^*C_{n-1}$ with $A^*\subset F_{n-2}$ we have
$$
\frac{\lambda(AC_n\cap fS_n)}{\lambda(S_n)} = \lambda_{F_{n-1}}(A) + \overline{o}(1).
$$
\end{enumerate}
\end{lemma}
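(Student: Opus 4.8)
The plan is to establish both parts by carefully unwinding the tiling structure already recorded in the excerpt and then, for part (ii), invoking the ``random translation'' property (\ref{dJlem}) of the maps $s_n$.

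For part (i), I would first use the basic identity $S_n = \widetilde F_{n-1}\phi_{n-1}(I_{n-1})$ (the analogue of the displayed formula for $S_{n+1}$ with index shifted down) to write
$fS_n = f'\phi_{n-1}(h)\widetilde F_{n-1}\phi_{n-1}(I_{n-1})$. Since $\widetilde F_{n-1}$ is a ``rectangle'' $(-\widetilde a_{n-1},\widetilde a_{n-1}]_\Z\times(-\widetilde a_{n-1},\widetilde a_{n-1}]_\R\times\Z_2$ and $\phi_{n-1}$ has step exactly $2\widetilde a_{n-1}$ in each of the first two coordinates, translating $\widetilde F_{n-1}$ by $f'\in\widetilde F_{n-1}$ and then tiling by $\phi_{n-1}(I_{n-1})$ covers the block $\phi_{n-1}(I_{n-2}+\cdot)$ fully and is contained in $\phi_{n-1}(I_n+\cdot)$ — the loss and gain of one layer of $\phi_{n-1}$-translates on each side is exactly the standard ``off-by-one'' of translating a tiled rectangle by a sub-tile amount. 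The twist is the third-coordinate (the $\Z_2$-flip): conjugating by $\phi_{n-1}(h)$ versus left-multiplying — since $\phi_{n-1}(h)$ lies in $G^0$ it commutes modulo sign with the $\R$-coordinate, so if $f\in G^\alpha$ then the $\alpha$-level of $f\widetilde F_{n-1}$ is a translate of $\widetilde F_{n-1}^\alpha\phi_{n-1}(h)$ while, because of the flip in the multiplication law $(x,a,n)(y,b,m)=(x+y,a+(-1)^nb,n+m)$, the $\beta$-level gets its $\R$-coordinate reflected, which is precisely why $h$ is replaced by $h^*=(h_1,-h_2)$ in $L^{\pm}_n$. Once $L^-_n\subset fS_n\subset L^+_n$ is established, the estimate follows since $\lambda(L^+_n\setminus L^-_n)$ is $(|I_n|-|I_{n-2}|)$ tiles $= O(n)$ tiles out of $|I_{n-1}|=(2n-1)^2$ tiles making up $S_n$, so the ratio is $O(1/n)=\overline o(1)$.

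For part (ii), the hypothesis $fS_n\subset F_n$ lets me intersect with $L^-_n$ up to error $\overline o(1)$ by part (i), so it suffices to estimate $\lambda(AC_n\cap L^-_n)/\lambda(S_n)$. Now $AC_n=A^*C_{n-1}C_n$ and $C_n=c_n(H_{n-1})$ with $c_n(g)=s_{n-1}(g)\phi_{n-1}(g)$, so $AC_n$ is a union over $g\in H_{n-1}$ of translates $A^*C_{n-1}s_{n-1}(g)\phi_{n-1}(g)$; note $A^*C_{n-1}\subset F_{n-1}$, and $A^*C_{n-1}s_{n-1}(g)\subset F_{n-1}S_{n-1}\subset\widetilde F_{n-1}$ since $s_{n-1}(g)\in D_{n-1}\subset S_{n-1}$, so each such translate sits inside the window $\widetilde F_{n-1}\phi_{n-1}(g)$. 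Therefore $AC_n\cap L^-_n$ decomposes window-by-window over $g\in I_{n-2}+h$ (contributing at level $\alpha$) and $g\in I_{n-2}+h^*$ (at level $\beta$, with the reflection from part (i)); within each window the contribution is $\lambda\big((A^*C_{n-1})^{\alpha\text{ or }\beta}s_{n-1}(g)\big)$, and one must average these over $g$ and compare with $|I_{n-1}|\lambda_{F_{n-1}}(A)\lambda(F_{n-1})$ — i.e.\ with the ``ideal'' count where the translation $s_{n-1}(g)$ plays no role. This is exactly where (\ref{dJlem}) (equivalently (\ref{discr_meas}), i.e.\ the fact that $\dist_{g}s_{n-1}(g)$ is close to $\kappa_{D_{n-1}}$, which in turn approximates $\lambda_{S_{n-1}}$) enters: it says that as $g$ ranges over the relevant block of $H_{n-1}$, the values $s_{n-1}(g)$ equidistribute like $\kappa_{D_{n-1}}$, so $\frac{1}{|I_{n-2}|}\sum_{g}\lambda\big(A^{**}s_{n-1}(g)\big)\approx\int\lambda(A^{**}x)\,d\kappa_{D_{n-1}}(x)\approx\frac{1}{\lambda(S_{n-1})}\int_{S_{n-1}}\lambda(A^{**}x)\,d\lambda(x)$, and by Fubini the latter equals $\lambda(A^{**})$ (up to the boundary discrepancy, controlled since $A^{**}S_{n-1}\subset F_{n-1}$). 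Summing the $\alpha$- and $\beta$-level contributions reconstitutes $\lambda(A^*C_{n-1})=\lambda(F_{n-1})\lambda_{F_{n-1}}(A)$, and dividing by $\lambda(S_n)=|I_{n-1}|\lambda(\widetilde F_{n-1})$ and by a harmless $\lambda(\widetilde F_{n-1})/\lambda(F_{n-1})=1+\overline o(1)$ factor yields $\lambda_{F_{n-1}}(A)+\overline o(1)$.

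The main obstacle I anticipate is bookkeeping the two levels and the reflection $h\mapsto h^*$ consistently through the decomposition in part (ii): one has to be sure that the $\alpha$-level of the large set $fS_n$ really meets only the ``unflipped'' windows $\widetilde F_{n-1}^\alpha\phi_{n-1}(I_{n-2}+h)$ while $AC_n$ contributes its level-$\alpha$ mass there, and symmetrically for $\beta$, so that nothing is double-counted and the two partial sums really do add up to the full $\lambda(A^*C_{n-1})$. The second delicate point is making the appeal to (\ref{dJlem})/(\ref{discr_meas}) rigorous: (\ref{dJlem}) is stated along the ``horizontal'' direction $(t,0)$ for pairs, and one needs its (easier) one-point marginal consequence — that $\dist_{g\in I_{n-2}+h}s_{n-1}(g)$ is $\frac1{n-1}$-close to $\kappa_{D_{n-1}}$ — together with (\ref{discr_meas}) to pass from $\kappa_{D_{n-1}}$ to normalized Haar measure on $S_{n-1}$; both errors are $O(1/n)$ and absorbed into $\overline o(1)$, but the block $I_{n-2}+h$ must be checked to fit inside $H_{n-1}$ so that $s_{n-1}$ is defined there, which it does since $I_{n-2}+h\subset H_{n-1}$ given $fS_n\subset F_n$ and the definitions of $a_n,b_n,r_n$.
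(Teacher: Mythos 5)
Your part (i) follows the paper's argument in all essentials: the tiling $fS_n=f'\phi_{n-1}(h)\widetilde F_{n-1}\phi_{n-1}(I_{n-1})$, the off-by-one layer of windows coming from translating by $f'\in\widetilde F_{n-1}$, the reflection $h\mapsto h^*$ on the $\beta$-level forced by the $\Z_2$-twist in the multiplication law, and the $O(n)/(2n-1)^2$ count for $\lambda(L^+_n\setminus L^-_n)/\lambda(S_n)$. That part is fine.

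Part (ii) has a genuine gap at its central analytic step. After the (correct) window-by-window decomposition, the quantity to be summed over $k$ in the block $I_{n-2}+h$ (resp.\ its reflection) is $\lambda\bigl(As_{n-1}(k)\cap\widetilde F_{n-1}^{\alpha}\bigr)$, and you propose to evaluate the sum by claiming that $\dist_{k}\,s_{n-1}(k)$ over this block is $\tfrac1{n-1}$-close to $\kappa_{D_{n-1}}$ and hence, via (\ref{discr_meas}), close to normalized Haar measure on $S_{n-1}$. No such statement is available: (\ref{dJlem}) constrains $s_{n-1}$ only along \emph{horizontal} progressions of length $N>r_{n-1}/(n-1)^2$, whereas $I_{n-2}+h$ is a square block of side $2n-3\ll r_{n-1}/(n-1)^2$ (recall $n^4/r_n\to0$), on which (\ref{dJlem}) imposes no constraint whatsoever. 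Moreover the claimed $\|\cdot\|_1$-closeness is impossible on cardinality grounds: the empirical distribution is supported on at most $(2n-3)^2$ points while $|D_{n-1}|$ is far larger (of order $n^2b_{n-1}^2$ for the paper's explicit choice), so its $\|\cdot\|_1$-distance to $\kappa_{D_{n-1}}$ tends to $1$, not to $0$. A telltale sign of the problem is that your argument nowhere uses the hypothesis $A=A^*C_{n-1}$: were it valid, it would prove the claim for arbitrary $A\subset F_{n-1}$, which is exactly the false statement of \cite[Lemma~2.3(ii)]{DdJ} that Remark~\ref{gap_rem} is at pains to correct. The paper's mechanism requires no equidistribution of $s_{n-1}$ over the block at all: since $As_{n-1}(k)\subset F_{n-1}S_{n-1}\subset\widetilde F_{n-1}$, right translation by $s_{n-1}(k)$ merely preserves or swaps the two levels of $A$ according to $\pi_3(s_{n-1}(k))$, so $\lambda(As_{n-1}(k)\cap\widetilde F_{n-1}^{\alpha})$ equals $\lambda(A^{\alpha})$ or $\lambda(A^{\beta})$; and because $A=A^*C_{n-1}$ is $\tfrac1{n-2}$-balanced (the computation preceding the lemma, which uses the distribution of $s_{n-2}$ over \emph{all} of $H_{n-2}$, where (\ref{dJlem}) does apply), both values equal $(\tfrac12+\overline{o}(1))\lambda(A)$ independently of $s_{n-1}(k)$. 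Replacing your equidistribution step by this balancedness argument is what repairs the proof.
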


Here $\overline{o}(1)$ means a sequence that goes to 0 as $n\to\infty$ and does not depend on the choice of $A^*$ in $F_{n-2}$.

\begin{proof}
(i) Suppose $f\in G^0$ (the case $f\in G^1$ is considered in a similar way).
We have
\begin{align*}
fS_n &= f'\phi_{n-1}(h) \widetilde{F}_{n-1} \phi_{n-1}(I_{n-1}) \\
&= f'\widetilde{F}_{n-1}^0 \phi_{n-1}(h+I_{n-1}) \sqcup f'\widetilde{F}_{n-1}^1 \phi_{n-1}(h^*+I_{n-1}).
\end{align*}
Since $\widetilde{F}_{n-1}^0 \widetilde{F}_{n-1}^\alpha \subset \bigsqcup_{u\in I_1} \widetilde{F}_{n-1}^\alpha \phi_{n-1}(u)$, there exists a partition of $\widetilde{F}_{n-1}^\alpha$ into subsets $A_u^\alpha$, $u\in I_1$, such that $f'A_u^\alpha \subset \widetilde{F}_{n-1}^\alpha \phi_{n-1}(u)$ for any $u$ and $\alpha=0,1$.
Therefore
$$
fS_n = \bigsqcup_{u\in I_1} (f'A_u^0 \phi_{n-1}(u)^{-1}\phi_{n-1}(u+h+I_{n-1}) \sqcup f'A_u^1 \phi_{n-1}(u)^{-1}\phi_{n-1}(u+h^*+I_{n-1})).
$$
It remains to notice that $\bigsqcup_{u\in I_1} f'A_u^\alpha \phi_{n-1}(u)^{-1} = \widetilde{F}_{n-1}^\alpha$.

(ii) Since $fS_n \subset F_n$ and $F_n = \widetilde{F}_{n-1} \phi_{n-1}(H_{n-1})$, it follows from (i) that the subsets $K:=I_{n-1} + h$ and $K^*:=I_{n-1} + h^*$ are contained in $H_{n-1}$.
Therefore 
\begin{align*}
&\frac{\lambda(AC_n\cap fS_n)}{\lambda(S_n)}
= \sum_{k\in H_{n-1}} \frac{\lambda(Ac_n(k)\cap fS_n)}{\lambda(S_n)} \\
&= \sum_{k\in H_{n-1}} \frac{\lambda(Ac_n(k)\cap L^-_n)}{\lambda(S_n)} + \overline{o}(1) \\
&= \frac{1}{\lambda(S_n)} \sum_{k\in H_{n-1}} \lambda(As_{n-1}(k)\phi_{n-1}(k)\cap \widetilde{F}_{n-1}^\alpha \phi_{n-1}(K) \sqcup \widetilde{F}_{n-1}^\beta \phi_{n-1}(K^*)) + \overline{o}(1) \\
&= \frac{1}{\lambda(S_n)} \sum_{k\in K} \lambda(As_{n-1}(k) \cap \widetilde{F}_{n-1}^\alpha) +
\frac{1}{\lambda(S_n)} \sum_{k\in K^*} \lambda(As_{n-1}(k) \cap \widetilde{F}_{n-1}^\beta) + \overline{o}(1).
\end{align*}
Notice that
$$\lambda(As_{n-1}(k) \cap \widetilde{F}_{n-1}^{\alpha}) =
\begin{cases}
\lambda(A^\alpha), \text{ if $s_{n-1}(k) \in G^0$;} \\
\lambda(A^\beta), \text{ if $s_{n-1}(k) \in G^1$.}
\end{cases}
$$
In any case, since $A=A'C_{n-1}$ is $\frac{1}{n-2}$-balanced, we conclude from (\ref{balanced}) that $$\lambda(As_{n-1}(k) \cap \widetilde{F}_{n-1}^{\alpha}) = (\frac 1 2 + \overline{o}(1)) \lambda(A).$$
In a similar way $$\lambda(As_{n-1}(k) \cap \widetilde{F}_{n-1}^{\beta}) = (\frac 1 2 + \overline{o}(1)) \lambda(A).$$
Hence
\begin{multline*}
\frac{\lambda(AC_n\cap fS_n)}{\lambda(S_n)}
= \frac{\lambda(A)|K|(1+\overline{o}(1))}{\lambda(S_n)}+ \overline{o}(1) \\
= \frac{\lambda(A)}{\lambda(F_{n-1})} \cdot \frac{\lambda(F_{n-1}) |K|}{\lambda(S_n)} \cdot (1+\overline{o}(1)) + \overline{o}(1) \\
= \lambda_{F_{n-1}}(A) \cdot \frac{\lambda(F_{n-1}) (2n-1)^2}{(2n+1)^2 \lambda(\widetilde{F}_{n-1})} \cdot (1+\overline{o}(1)) + \overline{o}(1)
= \lambda_{F_{n-1}}(A) + \overline{o}(1).
\end{multline*}
\end{proof}

\begin{remark} \label{gap_rem}
We note that there is a gap in \cite[Lemma~2.3(ii)]{DdJ}.
It was stated there that the claim (ii) is true for each subset
$A\subset F_{n-1}$.
This is not true.
However --- as was shown in Lemma~\ref{mainLem}(ii) above --- the
claim is true if $A=A^*C_{n-1}$ for an arbitrary subset $A^*\subset
F_{n-2}$.
This corrected  version of the claim suffices to apply it in the proof
of \cite[Theorem~2.5]{DdJ} which is the only place in that paper where
\cite[Lemma~2.3(ii)]{DdJ} was used.
\end{remark}

We will also use the following simple lemma.

\begin{lemma} \label{techLem}
Let $A$, $B$ and $S$ be subsets of finite Haar measure in $G$.
Then
$$
\int_{S\times S} \lambda(Ax\cap By) d\lambda(x)d\lambda(y) = \int_{A\times B} \lambda(aS\cap bS) d\lambda(a)d\lambda(b).
$$
\end{lemma}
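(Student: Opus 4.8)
The plan is to reduce the identity to two applications of Fubini's theorem, the interchange of integrations being legitimate because all integrands are nonnegative and $A$, $B$, $S$ have finite Haar measure. First I would write each intersection-measure as an integral of a product of indicator functions: since $g\in Ax$ if and only if $gx^{-1}\in A$, we have
$$
\lambda(Ax\cap By)=\int_G \mathbf 1_A(gx^{-1})\,\mathbf 1_B(gy^{-1})\,d\lambda(g).
$$
Plugging this into the left-hand side and interchanging the order of integration yields
$$
\int_{S\times S}\lambda(Ax\cap By)\,d\lambda(x)\,d\lambda(y)
=\int_G\Big(\int_S \mathbf 1_A(gx^{-1})\,d\lambda(x)\Big)\Big(\int_S \mathbf 1_B(gy^{-1})\,d\lambda(y)\Big)\,d\lambda(g).
$$

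Next I would compute the inner integrals using that $G$ is unimodular, so that $\lambda$ is invariant under left translations, right translations, and inversion. For fixed $g$, the map $x\mapsto a:=gx^{-1}$ is a measure-preserving bijection of $G$ carrying $S$ onto $gS^{-1}$, whence $\int_S \mathbf 1_A(gx^{-1})\,d\lambda(x)=\int_{gS^{-1}}\mathbf 1_A(a)\,d\lambda(a)$. Now the elementary equivalence $a\in gS^{-1}\iff g\in aS$ gives $\mathbf 1_{gS^{-1}}(a)=\mathbf 1_{aS}(g)$, so the first inner integral equals $\int_A \mathbf 1_{aS}(g)\,d\lambda(a)$, and likewise the second one equals $\int_B \mathbf 1_{bS}(g)\,d\lambda(b)$. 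Substituting these back and applying Fubini once more,
$$
\int_{S\times S}\lambda(Ax\cap By)\,d\lambda(x)\,d\lambda(y)
=\int_{A\times B}\Big(\int_G \mathbf 1_{aS}(g)\,\mathbf 1_{bS}(g)\,d\lambda(g)\Big)\,d\lambda(a)\,d\lambda(b)
=\int_{A\times B}\lambda(aS\cap bS)\,d\lambda(a)\,d\lambda(b),
$$
which is the asserted identity.

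There is no serious obstacle here; the computation is routine. The only point requiring a little care is the bookkeeping of left versus right translations in the change of variable $x\mapsto gx^{-1}$, and this is exactly where the standing assumption that $G$ is unimodular (Section~1) is used, together with the trivial set-theoretic equivalence $g\in aS\iff a\in gS^{-1}$.
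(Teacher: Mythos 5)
Your proof is correct and is essentially the paper's argument in different notation: the paper realizes both sides as $\lambda^3$ of two subsets of $G^3$ identified by the measure-preserving bijection $(a,x,y)\mapsto(a,axy^{-1},y)$, which is exactly your substitution $g=ax$, $b=gy^{-1}$ written set-theoretically. Both versions rest on the same two ingredients --- Tonelli/Fubini for the nonnegative integrand and unimodularity of $G$ to justify the right translations and the inversion --- so there is nothing to add.
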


\begin{proof}
Notice that $G$ is unimodular.
Consider two subsets in $G^3$:
\begin{align*}
\Omega_1 :=& \{ (a,x,y) \mid x\in S, y\in S, a\in A\cap Byx^{-1} \} \\
=& \{ (a,x,y) \mid a\in A, y\in S, x\in a^{-1}By\cap S \} \text{ and} \\
\Omega_2 :=& \{ (a,b,y) \mid a\in A, b\in B, y\in b^{-1}aS\cap S \} \\
=& \{ (a,b,y) \mid a\in A, y\in S, b\in B\cap aSy^{-1} \}.
\end{align*}
It is clear that the maping $\Omega_1\ni(a,x,y)\mapsto (a,axy^{-1},y)\in\Omega_2$ is 1-to-1 and $\lambda^3$-preserving.
Applying Fubini theorem we obtain that
$$
\int_{S\times S} \lambda(Ax\cap By) d\lambda(x)d\lambda(y) = \lambda^3(\Omega_1) = \lambda^3(\Omega_2) = \int_{A\times B} \lambda(aS\cap bS) d\lambda(a)d\lambda(b).
$$
\end{proof}

The following lemma is the first step to prove mixing for $T_{(1,0,0)}$.
Let $h_0 := (1,0) \in \Z^2$.
Then $\phi_n(h_0) = (1,0,0)^{2\widetilde{a}_n}$.

\begin{lemma} \label{LemWM}
Given a sequence of subsets $H_n^*\subset H_n$ such that $\frac{|H_n^*|}{|H_n|}\to\delta$ for some $\delta\geqslant 0$, we let $C_n^*:=c_n(H_{n-1}^*)$.
Then
\begin{equation} \label{WMlem}
\sup_{A^*,B^*\in\sigma(\xi_{n-1})} \left| \mu(T_{\phi_n(h_0)} [A^*C_n^*]_n \cap [B^*]_{n-1}) - \mu([A^*C_n^*]_n) \mu([B^*]_{n-1}) \right| \to 0.
\end{equation}
\end{lemma}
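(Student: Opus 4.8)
The plan is to pass from the continuous action to the finite combinatorial data of the $(C,F)$-construction, so that the mixing-type estimate \eqref{WMlem} becomes a statement about how the perturbations $s_n$ distribute copies of $F_{n-1}$ inside $F_n$. First I would reduce to cylinders at level $n-1$: writing $A^* , B^* \in \sigma(\xi_{n-1})$, the set $[A^*C_n^*]_n$ is the disjoint union $\bigsqcup_{h\in H_{n-1}^*} [A^* c_n(h)]_n$, and $c_n(h) = s_{n-1}(h)\phi_{n-1}(h)$, so $A^*c_n(h) = A^* s_{n-1}(h)\phi_{n-1}(h)$ sits in the window $\widetilde F_{n-1}\phi_{n-1}(h)$. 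The translation by $\phi_n(h_0) = (1,0,0)^{2\widetilde a_n}$ is a shift of the top-level tiling of $F_n$ by the windows $\widetilde F_{n-1}\phi_{n-1}(k)$, $k\in H_{n-1}$; more precisely, using \eqref{CFhg} and the identity $\phi_n(h_0) = $ (a unit horizontal step at scale $\widetilde a_n$) one sees that $T_{\phi_n(h_0)}[A^*c_n(h)]_n$ is, up to a set of small relative measure coming from the at most $O(r_{n-1})$ windows pushed out of $F_n$, equal to $[A^* c_n(h+(N,0))]_n$ where $N$ is the appropriate (large, of order $r_n / r_{n-1}$... actually of order $\widetilde a_n$ measured in $\phi_{n-1}$-units) horizontal displacement. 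The point is that $N$ is much larger than $\delta\, r_{n-1}$ for every fixed $\delta$, so \eqref{dJlem} applies.

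Next I would compute the overlap $\mu(T_{\phi_n(h_0)}[A^*C_n^*]_n \cap [B^*]_{n-1})$. Expanding $[B^*]_{n-1} = \bigsqcup_{k\in H_{n-1}}[B^* c_n(k)]_n$ and intersecting, the cross-term for a pair $(h,k)$ is $\mu([A^* s_{n-1}(h+h_0')\phi_{n-1}(h+h_0')]_n \cap [B^* s_{n-1}(k)\phi_{n-1}(k)]_n)$, which is nonzero only when the two windows coincide, i.e.\ $k = h + h_0'$ (here $h_0'$ denotes the horizontal $\phi_{n-1}$-shift induced by $\phi_n(h_0)$). Inside a common window this overlap equals $\mu(X_n)\,\lambda(A^* s_{n-1}(k-h_0') \cap B^* s_{n-1}(k))/\lambda(F_{n-1})$. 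Summing over $k$ and normalizing, we are looking at an average over $k$ of $\lambda(A^* u \cap B^* v)/\lambda(F_{n-1})$ where $(u,v) = (s_{n-1}(k-h_0'), s_{n-1}(k))$ ranges over the pairs produced by the map $s_{n-1}$ at horizontal displacement $N$. By \eqref{dJlem} the distribution of these pairs is within $\frac{1}{n-1}$ in $\ell^1$-norm of $\kappa_{D_{n-1}}\times\kappa_{D_{n-1}}$; hence, since $\lambda(A^*u\cap B^*v)/\lambda(F_{n-1})$ is a bounded function of $(u,v)$, the average is $\int_{S_{n-1}\times S_{n-1}} f_{A^*,B^*}\,d\kappa_{D_{n-1}}d\kappa_{D_{n-1}} + \overline o(1)$. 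Then \eqref{discr_meas} replaces this by $\lambda(S_{n-1})^{-2}\int_{S_{n-1}\times S_{n-1}} f_{A^*,B^*}\,d\lambda d\lambda + \overline o(1)$, and Lemma~\ref{techLem} rewrites the latter as $\lambda(S_{n-1})^{-2}\int_{A^*\times B^*}\lambda(aS_{n-1}\cap bS_{n-1})\,d\lambda(a)d\lambda(b)$. Using $F_{n-1}S_{n-1}\subset\widetilde F_{n-1}$ and $\lambda(S_{n-1})/\lambda(\widetilde F_{n-1})\to$ a constant (with the $S_{n-1}$-translates of a point of $F_{n-1}$ essentially filling a fixed-shape box of size comparable to $S_{n-1}$, up to relative error $O(a_{n-1}/b_{n-1}) = O(r_{n-2}/n)\to 0$), one gets $\lambda(aS_{n-1}\cap bS_{n-1}) = \lambda(S_{n-1})(1+\overline o(1))$ for $a\in A^*$, $b\in B^*$, so the double integral is $\lambda(A^*)\lambda(B^*)\lambda(S_{n-1})(1+\overline o(1))$, giving the product $\mu([A^*]_{n-1})\mu([B^*]_{n-1})(1+\overline o(1)) = \lambda_{F_{n-1}}(A^*)\lambda_{F_{n-1}}(B^*)(1+\overline o(1))$ after normalization.

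Finally I would account for the factor $\delta$: only the windows with $h\in H_{n-1}^*$ contribute, and since $h\mapsto h+h_0'$ is (up to the negligible boundary windows) a bijection of $H_{n-1}$, the number of surviving pairs is $|H_{n-1}^*|(1+\overline o(1))$ out of $|H_{n-1}|$; thus the left factor $\mu([A^*C_n^*]_n)$, which equals $(|H_{n-1}^*|/|H_{n-1}|)\,\lambda_{F_{n-1}}(A^*)\,\mu(X_{n-1}) = (\delta+\overline o(1))\lambda_{F_{n-1}}(A^*)$, appears correctly on both sides. Collecting the error terms — all of the form $\overline o(1)$ uniform in $A^*,B^*$ — and taking the supremum over $A^*,B^*\in\sigma(\xi_{n-1})$ yields \eqref{WMlem}. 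The main obstacle is the bookkeeping in the first paragraph: one must verify carefully that $T_{\phi_n(h_0)}$ really acts as a horizontal shift of the window-tiling by a displacement $N$ large compared with $r_{n-1}$ (so that hypothesis $N>\delta r_{n-1}$ of \eqref{dJlem} holds for every fixed $\delta$ and all large $n$), and that the windows pushed across the boundary of $F_n$ have total relative measure $\overline o(1)$; the subsequent chain \eqref{dJlem} $\to$ \eqref{discr_meas} $\to$ Lemma~\ref{techLem} is essentially forced once the reduction is set up, but one should double-check the scale estimate $\lambda(S_{n-1})\sim\lambda(\widetilde F_{n-1})\cdot$const and $\lambda(F_{n-1})|K|/\lambda(S_n)\to 1$, which mirrors the computation at the end of Lemma~\ref{mainLem}(ii).
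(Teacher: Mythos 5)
Your reduction is set up at the wrong scale, and this is a genuine gap rather than bookkeeping. You treat $T_{\phi_n(h_0)}$ as a shift of the tiling of $F_n$ by the windows $\widetilde F_{n-1}\phi_{n-1}(k)$, $k\in H_{n-1}$, through a horizontal displacement $N$ of order $\widetilde a_n/\widetilde a_{n-1}$, with only $O(r_{n-1})$ windows lost at the boundary. But $\widetilde a_n/\widetilde a_{n-1}=2r_{n-1}+2n+n/\widetilde a_{n-1}$, which already exceeds the horizontal extent $2r_{n-1}+1$ of $H_{n-1}$; equivalently, $2\widetilde a_n>2a_n$, so $\phi_n(h_0)F_n\cap F_n=\varnothing$ and \emph{every} window is pushed out of $F_n$, not a negligible fraction. (The shifted windows also fail to align with the tiling, since $\widetilde a_n$ is not an integer multiple of $\widetilde a_{n-1}$.) Consequently there are no surviving pairs $(h,h+(N,0))$ in $H_{n-1}$ to which \eqref{dJlem} could be applied. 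Note also that in Lemma~\ref{dJlemma} the quantity required to be large ($N>\delta r$) is the \emph{length of the averaging progression}, not the displacement $h'-h$; the displacement only needs to be nonzero and small enough that both progressions stay inside the grid, and your large displacement violates exactly that.

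The correct move is to refine one level \emph{up}: write $[A]_n=\bigsqcup_{h\in H_n}[Ac_{n+1}(h)]_{n+1}$ and use that $\phi_n(h_0)$ is central and $\phi_n$ is a homomorphism, so that $\phi_n(h_0)Ac_{n+1}(h)=As_n(h)s_n(h+h_0)^{-1}c_{n+1}(h+h_0)$ holds exactly for all $h\in H_n\cap(H_n-h_0)$. The shift then becomes a unit displacement $h_0=(1,0)$ of the level-$(n+1)$ window index, and \eqref{dJlem} applies to $\dist_{h}(s_n(h),s_n(h+h_0))$ row by row with progression length $2r_n>r_n/n^2$. From that point your chain \eqref{discr_meas} followed by Lemma~\ref{techLem} is the right one, but your claim that cross-terms vanish unless the two windows coincide is also too strong: after translating by elements of $S_n$, the copies $A^*c$ and $B^*c'$ can meet whenever $c$ and $c'$ are partners in the sense $F_{n-1}cS_n\cap F_{n-1}c'S_n\neq\varnothing$, and the limiting constant $\theta_n\to\delta$ must be extracted from the sum over such pairs (e.g.\ by substituting $A^*=B^*=F_{n-1}$), which is where the factor $\mu([A^*C_n^*]_n)$ actually comes from.
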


\begin{proof}
Let $A,B\subset F_n$ be $\xi_n$-measurable.
We set $F^\circ_n := \{f\in F_n\mid fS_nS_n^{-1}\subset F_n\},$
$A^\circ:= A\cap F^\circ_n$, $B^\circ:=B\cap F^\circ_n$, $H'_n:=H_n\cap (H_n - h_0)$.
It is clear that $\mu(F_n\setminus F^\circ_n)\to 0$ and $\frac{|H'_n|}{|H_n|}\to 1$ as $n\to\infty$.
Since $\phi_n(h_0)\in C(G)$ for all $n\in\N$, we have
$$
\phi_n(h_0) A c_{n+1}(h) = A s_n(h) \phi_n(h_0 + h) = A s_n(h) s_n(h_0 + h)^{-1} c_{n+1} (h_0 + h).
$$
whenever $h\in H'_n$.
In particular, $\phi_n(h_0) A^\circ c_{n+1}(h)\subset F_{n+1}$ for all $h\in H'_n$.
Then
\begin{align*}
\mu(T_{\phi_n(h_0)}[A]_n \cap &[B]_n)
= \mu(T_{\phi_n(h_0)}[A^\circ]_n \cap [B^\circ]_n) + \overline{o}(1) \\
&= \sum_{h\in H_n} \mu(T_{\phi_n(h_0)}[A^\circ c_{n+1}(h)]_{n+1} \cap [B^\circ]_n) + \overline{o}(1) \\
&= \sum_{h\in H'_n} \mu(T_{\phi_n(h_0)}[A^\circ c_{n+1}(h)]_{n+1} \cap [B^\circ]_n) + \overline{o}(1) \\
&= \sum_{h\in H'_n} \mu([A^\circ s_n(h) s_n(h_0 + h)^{-1} c_{n+1} (h_0 + h)]_{n+1} \cap [B^\circ]_n) + \overline{o}(1) \\
&= \sum_{h\in H'_n} \mu([(A^\circ s_n(h) s_n(h_0 + h)^{-1} \cap B^\circ) c_{n+1} (h_0 + h)]_{n+1}) + \overline{o}(1) \\
&= \frac{1}{|H_n|}\sum_{h\in H'_n} \mu([A^\circ s_n(h) s_n(h_0 + h)^{-1} \cap B^\circ]_n) + \overline{o}(1) \\
&= \frac{1}{|H_n|}\sum_{h\in H'_n} \lambda_{F_n}(A^\circ s_n(h) \cap B^\circ s_n(h_0 + h))\mu(X_n) + \overline{o}(1) \\
&= \frac{1}{|H'_n|}\sum_{h\in H'_n} \lambda_{F_n}(A^\circ s_n(h) \cap B^\circ s_n(h_0 + h)) + \overline{o}(1) \\
&= \frac{1}{|H'_n|}\sum_{h\in H'_n} \lambda_{F_n}(As_n(h) \cap B s_n(h_0 + h)) + \overline{o}(1).
\end{align*}
Let $\nu_n := \dist_{h\in H'_n}(s_n(h),s_n(h+h_0))$.
Set $f_{A,B}(x,y) := \lambda_{F_n}(Ax\cap By) = \frac{\lambda(Ax\cap By)}{\lambda(F_n)}$.
Notice that
$$
\nu_n = \frac{1}{2r_n-1} \sum_{i=-r_n}^{r_n} \dist_{-r_n \leqslant t < r_n}(s_n(t,i),s_n(t+1,i)).
$$
It follows from (\ref{dJlem}) that $\| \nu_n - \kappa_{D_n}\times \kappa_{D_n} \|_1 < \frac{1}{n}$.
Then by (\ref{discr_meas})
\begin{multline*}
\mu(T_{\phi_n(h_0)}[A]_n \cap [B]_n) = \int_{S_n\times S_n} f_{A,B}d\nu_n + \overline{o}(1) \\
= \int_{S_n\times S_n} f_{A,B}d\kappa_{D_n}d\kappa_{D_n} + \overline{o}(1)
= \frac{1}{\lambda(S_n)^2} \int_{S_n\times S_n} f_{A,B}d\lambda d\lambda + \overline{o}(1),
\end{multline*}
Now take $A:=A^* C^*_n$ and $B:=B^* C_n$ for some $\xi_{n-1}$-measurable subsets $A^*,B^*\subset F_{n-1}$.
We say that elements $c$ and $c'$ of $C_n$ are \emph{partners} if $F_{n-1} c S_n \cap F_{n-1} c' S_n \neq \varnothing$.
We then write $c \bowtie c'$.
Since $A^*c x \cap B^*c' y = \varnothing$ for $c \not\bowtie c'$, it follows that
\begin{align*}
\int_{S_n\times S_n} f_{A,B}d\lambda d\lambda &=
\int_{S_n\times S_n} \lambda_{F_n} (A^*C_n x \cap B^*C_n y) d\lambda(x) d\lambda(y) \\
&= \frac{1}{\lambda(F_n)} \int_{S_n\times S_n} \sum_{C^*_n\ni c\bowtie c'\in C_n} \lambda (A^*c x \cap B^*c' y) d\lambda(x) d\lambda(y).
\end{align*}
Applying Lemma~\ref{techLem} we now obtain that
$$
\int_{S_n\times S_n} f_{A,B}d\lambda d\lambda =
\frac{1}{\lambda(F_n)} \sum_{C^*_n\ni c\bowtie c'\in C_n} \int_{A^* \times B^*} \lambda (ac S_n \cap bc' S_n) d\lambda(a) d\lambda(b).
$$
Next, we note that
$$
|\lambda (ac S_n \cap bc' S_n) - \lambda (c S_n \cap c' S_n)| \leqslant 8n\lambda(\widetilde{F}_{n-1}) = \overline{o}(1) \lambda(S_n).
$$
Each $c\in C_n$ has no more than $2(4n+1)^2$ partners.
Therefore
\begin{align*}
&\mu(T_{\phi_n(h_0)}[A^*C^*_n]_n \cap [B^*]_{n-1}) \\
&= \frac{1}{\lambda(S_n)^2} \sum_{C^*_n\ni c\bowtie c'\in C_n} \int_{A^* \times B^*} \frac{\lambda (c S_n \cap c' S_n) + \lambda(S_n)\overline{o}(1)}{\lambda(F_n)} d\lambda(a) d\lambda(b) + \overline{o}(1) \\
&= \frac{\lambda(A^*)\lambda(B^*)}{\lambda(F_{n-1})^2}
\frac{\lambda(F_{n-1})^2}{\lambda(S_n)^2 \lambda(F_n)}
\sum_{C^*_n\ni c\bowtie c'\in C_n} \left( \lambda (c S_n \cap c' S_n) + \lambda(S_n)\overline{o}(1) \right) + \overline{o}(1) \\
&= \lambda_{F_{n-1}}(A^*) \lambda_{F_{n-1}}(B^*) \theta_n
\pm \frac{\lambda(F_{n-1})^2 |H^*_n| 2(4n+1)^2 \lambda(S_n) \overline{o}(1)}{\lambda(S_n)^2 \lambda(F_n)}
+ \overline{o}(1) \\
&= \lambda_{F_{n-1}}(A^*) \lambda_{F_{n-1}}(B^*) \theta_n
\pm \frac{\lambda(F_{n-1})^2 |H^*_n| 2(4n+1)^2 \overline{o}(1)}{\lambda(\widetilde{F}_{n-1})^2 (2n-1)^2 |H_n|}
+ \overline{o}(1) \\
&= \lambda_{F_{n-1}}(A^*) \lambda_{F_{n-1}}(B^*) \theta_n
+ \overline{o}(1),
\end{align*}
where $\theta_n = \frac{\lambda(F_{n-1})^2}{\lambda(S_n)^2 \lambda(F_n)} \sum_{C^*_n\ni c\bowtie c'\in C_n} \lambda (c S_n \cap c' S_n)$.
Substituting $A^* = B^* = F_{n-1}$ and passing to the limit we obtain that $\theta_n\to \delta$ as $n\to\infty$.
Hence
$$
\mu(T_{\phi_n(h_0)}[A^*C^*_n]_n \cap [B^*]_{n-1}) = \mu([A^*C^*_n]_n)\mu([B^*]_{n-1}) + \overline{o}(1).
$$
Since $\overline{o}(1)$ does not depend on the choice of $A^*$ and $B^*$ inside $F_{n-1}$, the claim is proven.
\end{proof}

\begin{cor}
The transformation $T_{(1,0,0)}$ is weakly mixing.
\end{cor}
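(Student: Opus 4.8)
The plan is to read off weak mixing of $T_{(1,0,0)}$ directly from Lemma~\ref{LemWM}, used with the simplest possible parameters. Take $H_n^*:=H_n$ for every $n$, so that $\delta=1$ and $C_n^*=c_n(H_{n-1})=C_n$. Recall that $[A^*C_n]_n=[A^*]_{n-1}$ and $\mu([A^*C_n]_n)=\mu([A^*]_{n-1})$ for every $A^*\subset F_{n-1}$, and that $\phi_n(h_0)=(1,0,0)^{2\widetilde{a}_n}$, so $T_{\phi_n(h_0)}=T_{(1,0,0)}^{2\widetilde{a}_n}$ with exponents $2\widetilde{a}_n\to\infty$. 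With these substitutions (\ref{WMlem}) reads
\[
\sup_{A^*,B^*\in\sigma(\xi_{n-1})}\bigl|\mu\bigl(T_{(1,0,0)}^{2\widetilde{a}_n}[A^*]_{n-1}\cap[B^*]_{n-1}\bigr)-\mu([A^*]_{n-1})\,\mu([B^*]_{n-1})\bigr|\longrightarrow 0 .
\]

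The next step is to upgrade this from $\xi$-measurable cylinders to arbitrary Borel sets. Fix $A,B\in\fr B$ and $\varepsilon>0$. Since the cylinders generate $\fr B$, the approximation property stated just after conditions (i)--(iii) on the partitions $\xi_n$ provides an index $k_0$ and $\xi_{k_0}$-measurable sets $A_0,B_0\subset F_{k_0}$ with $\mu([A_0]_{k_0}\bigtriangleup A)<\varepsilon$ and $\mu([B_0]_{k_0}\bigtriangleup B)<\varepsilon$. For every $n-1\geqslant k_0$, iterating property (ii) of the partitions $\xi_n$ shows that $A_0C_{k_0+1}\cdots C_{n-1}$ is $\xi_{n-1}$-measurable and that $[A_0C_{k_0+1}\cdots C_{n-1}]_{n-1}=[A_0]_{k_0}$, and similarly for $B_0$. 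Plugging these into the displayed estimate and using that each $T_{(1,0,0)}^{2\widetilde{a}_n}$ preserves $\mu$, the triangle inequality yields $\limsup_n\bigl|\mu\bigl(T_{(1,0,0)}^{2\widetilde{a}_n}A\cap B\bigr)-\mu(A)\mu(B)\bigr|\leqslant 4\varepsilon$. As $\varepsilon>0$ was arbitrary, $\mu\bigl(T_{(1,0,0)}^{2\widetilde{a}_n}A\cap B\bigr)\to\mu(A)\mu(B)$ for all $A,B\in\fr B$.

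Finally I would deduce weak mixing by ruling out non-constant eigenfunctions. Let $U$ be the Koopman operator of $T_{(1,0,0)}$ on $L^2(X,\mu)$. The previous step gives $\langle U^{2\widetilde{a}_n}\mathbf 1_A,\mathbf 1_B\rangle\to\mu(A)\mu(B)$ for all $A,B\in\fr B$; since the $U^{2\widetilde{a}_n}$ are isometries and the span of the indicators $\mathbf 1_A$ is dense in $L^2(X,\mu)$, it follows that $U^{2\widetilde{a}_n}\to P$ in the weak operator topology, where $P$ is the orthogonal projection onto the constants. Hence, if $f\in L^2(X,\mu)$ satisfies $Uf=\lambda f$ with $|\lambda|=1$ and $\int_X f\,d\mu=0$, then $\lambda^{2\widetilde{a}_n}\|f\|_2^2=\langle U^{2\widetilde{a}_n}f,f\rangle\to\langle Pf,f\rangle=0$, so $f=0$. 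Therefore $U$ has no non-constant eigenfunction, which is equivalent to $T_{(1,0,0)}$ being weakly mixing.

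As for difficulty, there is essentially no obstacle once Lemma~\ref{LemWM} is available. The only point that needs (routine) care is the passage from $\xi_{n-1}$-measurable cylinders to arbitrary measurable sets --- one has to check that the approximation is uniform enough in $n$ to survive the limit --- while the identification $T_{\phi_n(h_0)}=T_{(1,0,0)}^{2\widetilde{a}_n}$ and the fact that $2\widetilde{a}_n\to\infty$ are immediate from the definitions.
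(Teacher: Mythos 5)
Your proposal is correct and follows the paper's own route exactly: substitute $H_n^*:=H_n$ into Lemma~\ref{LemWM} to conclude that $(\phi_n(h_0))_{n\ge1}=((1,0,0)^{2\widetilde a_n})_{n\ge 1}$ is a mixing sequence for $T$, then pass from $\xi$-measurable cylinders to arbitrary Borel sets by approximation. The only difference is that you spell out the final standard implication (a mixing sequence forces the Koopman operator to have no non-constant eigenfunctions, hence weak mixing), which the paper leaves implicit.
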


\begin{proof}
Substituting $H_n^* := H_n$ to (\ref{WMlem}) we obtain that
$$
\sup_{A^*,B^*\in\sigma(\xi_{n-1})} \left| \mu(T_{\phi_n(h_0)} [A^*]_{n-1} \cap [B^*]_{n-1}) - \mu([A^*]_{n-1}) \mu([B^*]_{n-1}) \right| \to 0.
$$
Since each measurable subset of $X$ can be approximated by $[A^*]_{n-1}$ for large $n$ and $\xi_{n-1}$-measurable subset $A^*\subset F_{n-1}$, it follows that the sequence $(\phi_n(h_0))_{n=1}^\infty$ is mixing for $T$, that is $\mu(T_{\phi_n(h_0)}A\cap B) \to \mu(A)\mu(B)$ for every pair of measurable subsets $A,B\subset X$.
\end{proof}

\begin{prop} \label{mixingProp}
The transformation $T_{(1,0,0)}$ is mixing.
\end{prop}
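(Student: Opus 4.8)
The plan is to upgrade the partial mixing along $(\phi_n(h_0))_{n}$ obtained in the preceding Corollary to a statement that is uniform over all central elements $(k,0,0)$ whose size is comparable to $\widetilde a_n$. By a routine density argument it suffices to show, for one fixed $m$ and all $\xi_m$-measurable $A^{*},B^{*}\subset F_m$, that $\mu\bigl(T_{(1,0,0)}^{k}[A^{*}]_m\cap[B^{*}]_m\bigr)\to\mu([A^{*}]_m)\mu([B^{*}]_m)$ as $|k|\to\infty$. Given $k$ with $|k|$ large (say $k>0$; the case $k<0$ is symmetric), let $n=n(k)$ be the largest integer with $\widetilde a_n\le k$, so that $\widetilde a_n\le k<\widetilde a_{n+1}$ and $n(k)\to\infty$. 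For $k$ large we then have $n(k)>m$, and $[A^{*}]_m=[\widetilde A]_{n-1}$, $[B^{*}]_m=[\widetilde B]_{n-1}$ where $\widetilde A:=A^{*}C_{m+1}\cdots C_{n-1}$ and $\widetilde B:=B^{*}C_{m+1}\cdots C_{n-1}$ are $\xi_{n-1}$-measurable by condition (ii) on the partitions $\xi_\bullet$ (an empty product being read as $A^{*}$). Thus the proposition follows once we prove: \emph{there is $\varepsilon_n\to0$ such that for all $n$, all integers $k$ with $\widetilde a_n\le|k|<\widetilde a_{n+1}$, and all $\xi_{n-1}$-measurable $A,B\subset F_{n-1}$,}
$$
\bigl|\mu\bigl(T_{(k,0,0)}[A]_{n-1}\cap[B]_{n-1}\bigr)-\mu([A]_{n-1})\mu([B]_{n-1})\bigr|<\varepsilon_n .
$$

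To prove this one repeats the computation from the proof of Lemma~\ref{LemWM}, but carried out at the level of $F_{n+2}$ and with an arbitrary window shift in place of $h_0$. Write $k=2q\widetilde a_n+\rho$ with $|\rho|\le\widetilde a_n$; then $1\le|q|<r_n+n+2$, since $\widetilde a_{n+1}=(2r_n+2n+2)\widetilde a_n+(n+1)$. As $(k,0,0)$ is central, the multiplication law gives $(k,0,0)\,\widetilde F_n\phi_n(h)=(\rho,0,0)\,\widetilde F_n\phi_n\bigl(h+(q,0)\bigr)$ for every $h\in\Z^{2}$. Hence $T_{(k,0,0)}$ sends the copy of $[A]_{n-1}=[AC_n]_n$ sitting in $\widetilde F_n\phi_n(h)$ ($h\in H_n$) essentially onto the copy in $\widetilde F_n\phi_n\bigl(h+(q,0)\bigr)$, the residual $\rho$ only smearing it into one neighbouring copy, which is invisible at the scale $S_n$ at which $[A]_{n-1}$ and $[B]_{n-1}$ are smooth (their conditional density on each copy $\widetilde F_{n-1}\phi_{n-1}(u)$, hence on each translate of $S_n$, is constant up to $\overline{o}(1)$). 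The shifted copies with $h+(q,0)\in H_n$ stay inside $F_{n+1}$; the remaining ones are carried into the single adjacent ``big window'' $\widetilde F_{n+1}\phi_{n+1}(1,0)$ of $F_{n+2}$, and this overflow has measure $O(|q|/r_n)\,\mu([A]_{n-1})$.

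Three ingredients, all uniform in $q$, then finish the argument. (a) The pushforward $\dist_{h\in H_n\cap(H_n-(q,0))}\bigl(s_n(h),s_n(h+(q,0))\bigr)$ lies within $\ell_1$-distance $\tfrac1n$ of $\kappa_{D_n}\times\kappa_{D_n}$: slice $H_n$ into horizontal segments and apply (\ref{dJlem}) along each with window length $N:=2r_n+1-|q|$, which exceeds $r_n/n^{2}$ for $n$ large because $|q|<r_n+n+2$; the bound $\tfrac1n$ is manifestly independent of $q$. (b) The fraction $1-\tfrac{|q|}{2r_n+1}$ of shifted copies remaining in $F_{n+1}$ contributes, through the chain $(\ref{discr_meas})\to$Lemma~\ref{techLem}$\to$``partners'' exactly as in the proof of Lemma~\ref{LemWM} (applied with the level-$n$ bases $\widetilde AC_n$ and $\widetilde BC_n$, which gives the product after the constant $\theta_n\to1$ is computed), the amount $\bigl(1-\tfrac{|q|}{2r_n+1}\bigr)\mu([A]_{n-1})\mu([B]_{n-1})+\overline{o}(1)$. (c) The complementary fraction of copies inside $\widetilde F_{n+1}\phi_{n+1}(1,0)$ meets $[B]_{n-1}$ there, which has the same conditional density; since the wrap-around $\phi_{n+1}(-1,0)$ is again a central element it shifts all overflowed copies by one common amount, so their $s_n$-labels remain paired along horizontal segments with those of the copies constituting $[B]_{n-1}$ in that window, and a further application of (\ref{dJlem}) (with a window length of order $|q|$, legitimate once $|q|\gtrsim r_n/n^{2}$) decorrelates those pairs and recovers the missing $\tfrac{|q|}{2r_n+1}\mu([A]_{n-1})\mu([B]_{n-1})+\overline{o}(1)$. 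When $|q|$ is too small for (\ref{dJlem}) to apply in (c) the overflow has measure $\overline{o}(1)$ anyway, so the two regimes overlap and exhaust all admissible $q$. Adding (b) and (c) proves the displayed estimate, hence the proposition.

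The step I expect to be the main obstacle is (c): showing that the mass of $T_{(k,0,0)}[A]_{n-1}$ pushed out of $F_{n+1}$ re-enters its intersection with $[B]_{n-1}$ inside the adjacent window of $F_{n+2}$ carrying \emph{precisely} the amount missing from (b), uniformly for $|q|$ all the way up to $\sim r_n$. This is the only place where the argument genuinely goes beyond the single-step analysis of Lemma~\ref{LemWM}; being a delicate bookkeeping of how copies and their $s_n$-labels wrap around the boundary of $\widetilde F_{n+1}$, it is exactly the kind of point where a gap can slip in (cf. Remark~\ref{gap_rem}).
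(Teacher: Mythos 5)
Your reduction to central elements and your steps (a) and (b) track the paper's own argument: the paper writes $g_n=f_n\phi_n(h_n)$, splits $H_n$ into the indices $H_n^2$ whose windows stay inside $F_{n+1}$ and the indices $H_n^1$ whose windows overflow into $F_{n+1}\phi_{n+1}(h_0)$ (plus a negligible boundary set $H_n^3$), and treats the first group exactly as you do in (b) — including a further split $A=A_1\sqcup A_2\sqcup A_3$ which is the precise form of your remark that the residual $\rho$ smears each copy into one neighbouring level-$n$ window. The genuine gap is in (c). The paper handles the overflow by writing $T_{g_n}[AC^1_{n+1}]_{n+1}=T_{\phi_{n+1}(h_0)}[\phi_{n+1}(h_0)^{-1}g_nAC^1_{n+1}]_{n+1}$ and invoking Lemma~\ref{LemWM} \emph{one level up}, i.e.\ it uses the decorrelation of the pairs $(s_{n+1}(h'),s_{n+1}(h'+h_0))$ as $h'$ runs over $H_{n+1}$. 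You instead stay at level $n$ and decorrelate $s_n$-labels inside the overflow strip, on the premise that the wrap-around ``shifts all overflowed copies by one common amount''. That premise is false: $[A]_{n-1}$ and $[B]_{n-1}$ are cylinders, hence spread over \emph{all} copies $F_{n+1}c_{n+2}(h')$, $h'\in H_{n+1}$, of $F_{n+1}$ inside $F_{n+2}$; the mass overflowing from the copy indexed by $h'$ lands next to the copy indexed by $h'+(1,0)$, so the relevant relative displacement is governed by $c_{n+2}(h')^{-1}c_{n+2}(h'+(1,0))$, that is by $s_{n+1}(h')$ and $s_{n+1}(h'+(1,0))$. It therefore varies with $h'$ — by up to $b_{n+1}=(2n+1)\widetilde{a}_n$ in each coordinate, i.e.\ by many level-$n$ windows, plus a sub-window remainder and a possible $\Z_2$-flip. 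There is no single adjacent window and no single common shift.

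One could try to repair (c) by conditioning on $h'$: for each fixed $h'$ the displacement is a fixed vector, and one checks it is nonzero for the whole admissible range $|q|<r_n+n+2$, so (\ref{dJlem}) does apply along each horizontal slice of the overflow strip. But you would then still need, uniformly in $h'$: control of the $h'$-dependent fractional offset (which falls outside the hypotheses of (\ref{discr_meas}) as stated, since the relevant translate of $BC_n$ leaves $F_n$); an accounting of the mass lost in the variable-width gap between consecutive copies of $F_{n+1}$ (of width up to $4b_{n+1}+2(n+1)$, negligible only after the regime $|q|\lesssim n^2$ is disposed of separately); and the final normalization, which the paper gets for free by substituting $A^*=B^*=F_{n-1}$ into Lemma~\ref{LemWM} to evaluate $\theta_n$. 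None of this is in your sketch. You correctly sensed that (c) is where the difficulty lies; the fix is not finer bookkeeping at level $n$ but a change of level — pull the overflow back by the central element $\phi_{n+1}(h_0)$ and apply the already-proved Lemma~\ref{LemWM} with the family $C^*_{n+2}$ of relabelled level-$(n+2)$ windows.
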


\begin{proof}
We have to show that
$$
\lim_{n\to\infty} \mu(T_{g_n}A\cap B) = \mu(A)\mu(B)
$$
for any sequence $(g_n)_{n=1}^\infty$ that goes to infinity in $C(G)$ and every pair of measurable subsets $A,B\subset X$.
Let $g_n\in F_{n+1}\setminus F_n$.
It suffices to show that a subsequence of $(g_n)_{n=1}^\infty$ is mixing for $T$.
We write $g_n = f_n \phi_n(h_n)$ for some $f_n\in \widetilde{F}_n\cap C(G)$ and $h_n\in H_n$.
Denote by $z\colon \Z\to C(G)$ the natural embedding $z(x):=(x,0,0)$.
We may assume that $f_n\in z(\Z_+)$ for all $n$
(the case $f_n\in z(\Z_-)$ is considered in a similar way).
Let $H'_n:= H_n\cap (H_n-h_n)$ and $F'_n:= F_n\cap (f_n^{-1}F_n)$.
Passing to a subsequence, if necessary, we also may assume without loss of generality that
$$
\frac{|H'_n|}{|H_n|}\to \delta_1 \quad\text{and}\quad \frac{\lambda(F'_n)}{\lambda(F_n)}\to \delta_2
$$
for some $\delta_1,\delta_2\geqslant 0$.
Partition $H_n$ into three subsets $H_n^1$, $H_n^2$ and $H_n^3$ as follows
\begin{align*}
& H_n^1 := \{ h\in H_n\mid g_nF_nc_{n+1}(h)\subset F_{n+1}\phi_{n+1}(h_0) \}, \\
& H_n^2 := \{ h\in H_n\mid g_nF_nc_{n+1}(h)\subset F_{n+1} \}, \\
& H_n^3 := H_n\setminus (H_n^1\sqcup H_n^2).
\end{align*}
As before $h_0=(1,0)\in\Z^2$.
Let $C^i_{n+1} := \phi_{n+1}(H_n^i)$.
It is clear that $|H_n^3|\leqslant 4(n+1)(2r_n+1)$ and $\left|H_n^2 \bigtriangleup H'_n\right|\leqslant 2r_n+1$.
Since $|H_n| = (2r_n+1)^2$, it follows that
$$
\frac{|H^1_n|}{|H_n|}\to 1-\delta_1, \quad \frac{|H^2_n|}{|H_n|}\to \delta_1, \quad \frac{|H^3_n|}{|H_n|}\to 0.
$$

Take two $\xi_n$-measurable subsets $A,B\subset F_n$.
Since $$\mu([AC^3_{n+1}]_{n+1})=\frac{|C^3_{n+1}|}{|C_{n+1}|} \mu([A]_n)\leqslant \frac{1}{2r_n+1}\to 0,$$ we have
\begin{equation} \label{first}
\left| \mu(T_{g_n}[AC^3_{n+1}]_{n+1} \cap [B]_n) - \mu([AC^3_{n+1}]_{n+1}) \mu([B]_n) \right| \to 0,
\end{equation}
so $[F_nC^3_{n+1}]_{n+1}$ is negligible.
It suffices to show mixing separately on each of the remaining subsets $[F_nC^1_{n+1}]_{n+1}$ and $[F_nC^2_{n+1}]_{n+1}$.

First, we note that $\phi_{n+1}(h_0)^{-1} g_n F_n C^1_{n+1} \subset F_{n+1}$.
Thus, by (\ref{CFhg}),
$$
T_{g_n}[AC^1_{n+1}]_{n+1} = T_{\phi_{n+1}(h_0)} [\phi_{n+1}(h_0)^{-1}g_n AC^1_{n+1}]_{n+1}.
$$
By Lemma~\ref{LemWM} (with $C^*_{n+1}:=\phi_{n+1}(h_0)^{-1}\phi_n(h_n) C^1_{n+1}$ and $A^*:=f_n A$) we obtain that
\begin{equation}
\left| \mu(T_{g_n}[AC^1_{n+1}]_{n+1} \cap [B]_n) - \mu([AC^1_{n+1}]_{n+1}) \mu([B]_n) \right| \to 0.
\end{equation}

It remains to consider the second case involving $C^2_{n+1}$.
If $\delta_1=0$, then obviously
\begin{equation}
\mu([AC^2_{n+1}]_{n+1}) \to 0.
\end{equation}
Suppose now that $\delta_1 > 0$.
Partition $A$ into three subsets $A_1$, $A_2$ and $A_3$ in the following way: $A_1:=A\cap f_n^{-1}F_n$, $A_2:=A\cap f_n^{-1}F_n \phi_n(h_0)$ and $A_3:=A\setminus (A_1\sqcup A_2)$.
In other words, $f_n A_1\subset F_n$, $f_n A_2\subset F_n \phi_n(h_0)$, $f_n A_3\cap(F_n \sqcup F_n \phi_n(h_0)) = \varnothing$.

Note that
\begin{equation}
\mu([A_3C_{n+1}^2]_{n+1}) \leqslant \mu([A_3]_n) \leqslant \frac{2n+1}{2r_n+1} \to 0.
\end{equation}
For $A_1$ and $A_2$ we argue as in the proof of Lemma~\ref{LemWM}.
Set $F^\circ_n := \{f\in F_n\mid fS_nS_n^{-1}\subset F_n\}$,
$A_1^\circ:= A_1\cap F^\circ_n$, $B^\circ:=B\cap F^\circ_n$.
We have
\begin{align*}
\mu(T_{g_n}[A_1 C_{n+1}^2]&_{n+1} \cap [B]_n)
= \sum_{h\in H'_n} \mu([\phi_n(h_n) f_n A_1^\circ c_{n+1}(h)]_{n+1} \cap [B^\circ]_n) + \overline{o}(1) \\
&= \sum_{h\in H'_n} \mu([(f_n A_1^\circ s_n(h) s_n(h_n+h)^{-1} \cap B^\circ) c_{n+1}(h)]_{n+1}) + \overline{o}(1) \\
&= \frac{1}{|H_n|} \sum_{h\in H'_n} \mu([f_n A_1^\circ s_n(h) s_n(h_n+h)^{-1} \cap B^\circ]_n) + \overline{o}(1) \\
&= \frac{\delta_1}{|H'_n|} \sum_{h\in H'_n} \lambda_{F_n}(f_n A_1^\circ s_n(h) \cap B^\circ s_n(h_n+h)_n) + \overline{o}(1) \\
&= \delta_1 \int_{S_n\times S_n} f_{A_1 f_n,B} d\nu_n + \overline{o}(1),
\end{align*}
where $\nu_n := \dist_{h\in H'_n}(s_n(h),s_n(h_n+h))$ and $f_{A_1 f_n,B}(x,y) = \lambda_{F_n}(A_1 f_nx\cap By)$.
Write $h_n=(t_n,0)$.
Since $\frac{2r_n-t_n+1}{2r_n+1} = \frac{|H'_n|}{|H_n|}\to \delta_1>0$ and
$$
\nu_n = \frac{1}{2r_n-1} \sum_{i=-r_n}^{r_n} \dist_{-r_n \leqslant t \leqslant r_n-t_n}(s_n(t,i),s_n(t+t_n,i)),
$$
it follows from (\ref{dJlem}) and (\ref{discr_meas}) that
$$
\mu(T_{g_n}[A_1 C_{n+1}^2]_{n+1} \cap [B]_n) = \frac{\delta_1}{\lambda(S_n)^2} \int_{S_n\times S_n} f_{A_1 f_n,B} d\lambda d\lambda + \overline{o}(1).
$$
Now take $A:=A^* C^*_n$ and $B:=B^* C_n$ for some $\xi_{n-1}$-measurable subsets $A^*,B^*\subset F_{n-1}$.
Let $C'_n := C_n\cap F'_n$.
It follows that $\frac{|C'_n|}{|C_n|}\to\delta_2$ and $\mu([A_1]_n\bigtriangleup [A^* C_n]_n)=\overline{o}(1)$.
Hence $\mu([A_1]_n) = \delta_2 \mu([A^*]_{n-1}) + \overline{o}(1)$.
Arguing as in the proof of Lemma~\ref{LemWM} we obtain that
$$
\mu(T_{g_n}[A^*C'_nC^2_{n+1}]_{n+1}\cap [B^*]_{n-1}) = \delta_2 \mu([A^*]_{n-1}) \mu([B^*]_{n-1}) + \overline{o}(1).
$$
Therefore
\begin{equation}
\left| \mu(T_{g_n}[A_1C^2_{n+1}]_{n+1} \cap [B]_n) - \mu([A_1C^2_{n+1}]_{n+1}) \mu([B]_n) \right| \to 0.
\end{equation}
Since $T_{g_n}[A_2]_n = T_{\phi_n(h_n+h_0)}[\phi_n(h_0)^{-1} f_n A_2]$ with $\phi_n(h_0)^{-1} f_n A_2\subset F_n$,  a similar reasoning yields
\begin{equation} \label{last}
\left| \mu(T_{g_n}[A_2C^2_{n+1}]_{n+1} \cap [B]_n) - \mu([A_2C^2_{n+1}]_{n+1}) \mu([B]_n) \right| \to 0.
\end{equation}
Since
$$
[A^*]_{n-1} = [A^* C_n C^1_{n+1}]_{n+1} \sqcup \bigsqcup_{i=1}^3 [A_i C^2_{n+1}]_{n+1} \sqcup [A^* C_n C^3_{n+1}]_{n+1},
$$
it follows from (\ref{first})--(\ref{last}) that
$$
\lim_{n\to\infty} \sup_{A^*,B^*\in\sigma(\xi_{n-1})} \left| \mu(T_{g_n}[A^*]_{n-1}\cap[B^*]_{n-1}) - \mu([A^*]_{n-1}) \mu([B^*]_{n-1}) \right| = 0.
$$
Since $\xi_n$-measurable cylinders generate the entire $\sigma$-algebra $\fr B$ as $n\to\infty$, it follows that $(g_n)_{n=1}^\infty$ is a mixing sequence for $T$, as desired.
\end{proof}

\begin{prop} \label{simpl}
The transformation $T_{(1,0,0)}$ is 2-fold simple and $C(T_{(1,0,0)}) = \{T_g\mid g\in G\}$.
\end{prop}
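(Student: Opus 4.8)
The plan is to classify $J_2^e(T)$ for $T:=T_{(1,0,0)}$, following closely the scheme of \cite{DdJ} (with the continuous-group bookkeeping handled as in \cite{Da_H}); the description of the centralizer will then be a formal consequence. Let $\lambda\in J_2^e(T)$. Since $T$ is of funny rank one and the $\xi_n$-measurable cylinders generate $\fr B$, it suffices to control, as $n\to\infty$, the values $\lambda([A]_n\times[B]_n)$ for $\xi_n$-measurable $A,B\subset F_n$, together with the finer data obtained by splitting $[A]_n=\bigsqcup_{h\in H_n}[Ac_{n+1}(h)]_{n+1}$ and likewise $[B]_n$; thus $\lambda$ is encoded, up to $\overline{o}(1)$, by the way the $n$-th Rokhlin column of the first copy of $X$ is coupled to that of the second copy.

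The heart of the proof is the analysis of this coupling. First, the ``$\Z$-displacement'' between a $\lambda$-typical coupled pair of points in $[F_n]_n\times[F_n]_n$ is — up to the boundary and wraparound errors, which are $\overline{o}(1)$ — invariant under $T\times T=T_{(1,0,0)}\times T_{(1,0,0)}$, since that action shifts both copies equally; by ergodicity of $\lambda$ it is therefore essentially constant, equal to some $d_n\in\Z$. What remains is the displacement in the $\R$ and $\Z_2$ directions, about which $T$ says nothing; this is controlled instead by the spacers. Re-running the computations from the proofs of Lemma~\ref{LemWM} and Proposition~\ref{mixingProp}, but now with the weights supplied by $\lambda$ in place of the uniform ones, one decomposes $\lambda$ according to the pairs of columns $(h,h')\in H_n\times H_n$ it couples: on the part where the coupling is ``coherent'' — i.e.\ explained by left multiplication by a single $g\in G$, the two $G$-levels being bookkept through the involution $h\mapsto h^*$ exactly as in Lemma~\ref{mainLem}, and the cylinder masses inside the resulting $S_n$-windows read off from Lemma~\ref{mainLem}(ii) — $\lambda$ looks like the off-diagonal $\mu_{T_g}$, whereas on the ``incoherent'' part the pairs $(s_n(h),s_n(h'))$, $h\neq h'$, equidistribute like $\kappa_{D_n}\times\kappa_{D_n}$ by (\ref{dJlem}) and hence, via (\ref{discr_meas}), contribute product measure. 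The outcome is that, along a subsequence, the restriction of $\lambda$ to the level-$(n-1)$ cylinder algebra equals, up to $\overline{o}(1)$, a fixed convex combination of $\mu\times\mu$ and of off-diagonals $\mu_{T_g}$ with $g$ in a finite subset of $G$ (off-diagonals with $g$ far out in $G$ being, by mixing, indistinguishable from $\mu\times\mu$ at any fixed level, and so absorbed into the product part).

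Now $\lambda$, $\mu\times\mu$ (weakly mixing, already proved) and each $\mu_{T_g}$ (a copy of $(X,\mu,T)$) are all ergodic, and $\{\mu\times\mu\}\cup\{\mu_{T_g}\mid g\in G\}$ is closed in the weak topology of joinings, since $g_n\to g$ gives $\mu_{T_{g_n}}\to\mu_{T_g}$ while $g_n\to\infty$ gives $\mu_{T_{g_n}}\to\mu\times\mu$ by Proposition~\ref{mixingProp}. Hence $\lambda$ — a limit of convex combinations of members of this set, and itself an extreme point among all $2$-fold self-joinings — must coincide with one of its members: either $\lambda=\mu\times\mu$, or $\lambda=\mu_{T_g}$ for some $g\in G$. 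This is precisely $2$-fold simplicity of $T$ (each $T_g$ centralizes $T$ because $(1,0,0)\in C(G)$), and in fact gives $J_2^e(T)\subset\{\mu\times\mu\}\cup\{\mu_{T_g}\mid g\in G\}$. For the centralizer: given $S\in C(T)$, the off-diagonal $\mu_S$ is an ergodic self-joining distinct from $\mu\times\mu$ (it is carried by the graph of $S$, which is $\mu\times\mu$-null since $\mu$ is non-atomic), so $\mu_S=\mu_{T_g}$ for some $g\in G$, whence $S=T_g$; as the action $T$ is faithful, $C(T)=\{T_g\mid g\in G\}$. The structure of the factors stated in Theorem~\ref{MainTh} then follows from Veech's theorem applied to this centralizer.

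The main obstacle is the second paragraph, in particular the passage from the ``$\lambda$-weighted column coupling'' to the clean alternative ``one group element versus product measure''. One must first show, after passing to subsequences, that the coupling stabilises to a limiting coupling carrying well-defined weights; then one must faithfully re-derive, now with these weights in place, the estimates of Lemma~\ref{LemWM} and Proposition~\ref{mixingProp} — the reorganisation of the weighted sum over $H_n$ into row-wise spacer distributions so that (\ref{dJlem}) applies, the replacement of $\kappa_{D_n}$ by $\lambda$ through (\ref{discr_meas}), and the handling of the boundary sets $F_n\setminus F_n^\circ$ and of the two $G$-levels via $h\mapsto h^*$ and Lemma~\ref{mainLem}. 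Ergodicity of $\lambda$ is used twice here: to force constancy of the $\Z$-displacement, and to rule out a genuine mixture surviving in the limit.
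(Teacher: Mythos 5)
Your overall strategy --- show that $\lambda$ lies in the weak closure of convex combinations of $\mu\times\mu$ and off-diagonals, then use extremality of ergodic joinings (Milman's partial converse to Krein--Milman) to conclude that $\lambda$ is one of them --- is a legitimate alternative in principle, but as written the argument has a genuine gap, and you have in effect deferred exactly the step where all the work lies. The paper does not decompose $\lambda$ at all: it verifies that the sets $\Phi_n := K_n+2\widetilde{a}_nJ_n$ form a F{\o}lner sequence in $\Z$ satisfying Shulman's condition, invokes Lindenstrauss's pointwise ergodic theorem to get that $\nu$-a.e.\ pair $(x,y)$ is generic along $(\Phi_n)$, fixes one generic pair, reads off its column indices $h_n,h_n'\in H_n$ (arranged via Borel--Cantelli to lie in $H_n^-$), and then runs a clean dichotomy on that single pair: if $h_n\neq h_n'$ for infinitely many $n$, the ergodic averages over $\Phi_n$ become integrals against $\zeta_n=\dist_{j\in J_n}(s_n((j,0)+h_n),s_n((j,0)+h_n'))$, which (\ref{dJlem}), (\ref{discr_meas}) and Lemma~\ref{mainLem}(ii) force to factorize, giving $\nu=\mu\times\mu$; if $h_n=h_n'$ eventually, then $y=T_kx$ with $k=f_N'f_N^{-1}$ and genericity gives $\nu=\mu_{T_k}$ directly. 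Your second paragraph asserts the analogous conclusion for the ``$\lambda$-weighted column coupling'' but never says how those weights are controlled. In particular, the claim that the $\Z$-displacement is ``invariant under $T\times T$ up to $\overline{o}(1)$, hence essentially constant by ergodicity'' is not a theorem as stated: the displacement is only defined relative to the $n$-th tower and is not a $T\times T$-invariant function, and the paper needs neither this constancy nor any stabilisation of weights --- only the zero/nonzero dichotomy for a single generic orbit. Without identifying the averaging sets $\Phi_n$ and a pointwise ergodic theorem along them (or some substitute), your plan cannot be executed; you acknowledge this yourself in your last paragraph.

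A second, smaller gap: your closedness claim for $E=\{\mu\times\mu\}\cup\{\mu_{T_g}\mid g\in G\}$ rests on ``$g_n\to\infty$ in $G$ implies $\mu_{T_{g_n}}\to\mu\times\mu$ by Proposition~\ref{mixingProp}'', but that proposition only treats sequences going to infinity in the center $C(G)=\Z\times\{0\}\times\{0\}$; for $g_n$ with bounded $\Z$-coordinate and unbounded $\R$-coordinate you would need mixing of the full $G$-action, which is not established in the paper. The paper's route avoids this entirely because it never forms the set $E$ nor takes weak limits of off-diagonals. Your deduction of the centralizer statement from the classification of $J_2^e(T_{(1,0,0)})$, on the other hand, is correct and matches the standard argument.
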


\begin{proof}
Take an ergodic joining $\nu\in J_2^e(T_{(1,0,0)})$.
Let $K_n := \left[-\frac{a_n}{n^2},\frac{a_n}{n^2}\right]_\Z$, $J_n := \left[-\frac{r_n}{n^2},\frac{r_n}{n^2}\right]_\Z$ and $\Phi_n := K_n + 2\widetilde{a}_n J_n$.
We claim that $\nu$-a.e. point $(x,y)\in X\times X$ is \emph{generic} for $T_{(1,0,0)}\times T_{(1,0,0)}$, i.e. for all cylinders $A,B\subset \bigcup_{n=1}^\infty \sigma(\xi_n)$ we have
\begin{equation} \label{generic}
\nu(A\times B) = \lim_{n\to\infty} \frac{1}{|\Phi_n|} \sum_{i\in\Phi_n} \chi_A (T_{(i,0,0)}x) \chi_B (T_{(i,0,0)}y).
\end{equation}
To see this, we first note that $(\Phi_n)_{n=1}^\infty$ is a F{\o}lner sequence in $\Z$.
Since
$$
\frac{a_n}{n^2} + \frac{2\widetilde{a}_n r_n}{n^2} < \frac{\widetilde{a}_n(2r_n+1)}{n^2} < \frac{2a_{n+1}}{(n+1)^2},
$$
it follows that $\Phi_n\subset K_{n+1} + K_{n+1}$ and hence $\bigcup_{m=1}^n \Phi_m\subset K_{n+1} + K_{n+1}$.
This implies that $\left| \Phi_{n+1} + \bigcup_{m\leqslant n} \Phi_m \right| \leqslant 3\left|\Phi_{n+1}\right|$ for every $n\in\N$, i.e. Shulman's condition \cite{Li} is satisfied for $(\Phi_n)_{n=1}^\infty$.
By \cite{Li}, the pointwise ergodic theorem holds along $(\Phi_n)_{n=1}^\infty$ for any ergodic transformation.
Since $T\times T$ is $\nu$-ergodic, (\ref{generic}) holds for $\nu$-a.a.\ $(x,y)\in X\times X$ and for every pair of cylinders $A,B\subset X$ from $\bigcup_{n=1}^\infty \sigma(\xi_n)$.

Fix a generic point $(x,y)\in X\times X$.
Since $x,y\in X_n$ for all sufficiently large $n$ and we have the following expansion
\begin{align*}
x &= (f_n, c_{n+1}(h_n), c_{n+2}(h_{n+1}),\ldots),\\
y &= (f'_n, c_{n+1}(h'_n), c_{n+2}(h'_{n+1}),\ldots)
\end{align*}
with $f_n,f'_n\in F_n$, $h_i,h'_i\in H_i$, $i\geqslant n$.
We let $H_n^- := \left[ -(1-\frac{1}{n^2})r_n, (1-\frac{1}{n^2})r_n \right]_\Z^2 \subset H_n$.
Since the marginals of $\nu$ both equal to $\mu$, we may assume without loss of generality that $h_n,h'_n\in H_n^-$.
Indeed, $$\mu(\{x=(f_n, c_{n+1}(h_n), c_{n+2}(h_{n+1}),\ldots)\in X_n \mid h_n\not\in H_i^-\})<\frac{2}{i^2},$$ and hence by Borel-Cantelli lemma for $\mu$-a.e.\ $x\in X_n$ and all but finitely many $i$ we have $h_i\in H_i^-$.
Then we may replace $x=(f_n, c_{n+1}(h_n), c_{n+2}(h_{n+1}),\ldots)\in X_n$ with $x=(f_n c_{n+1}(h_n)\cdots c_m(h_{m-1}), c_{m+1}(h_m),\ldots)\in X_m$ for some $m>n$ if necessary.
Similarly, $h'_n\in H_n^-$.

This implies, in turn, that
\begin{multline} \label{H_n_incl}
f_{n+1} = f_n c_{n+1}(h_n) \in
\widetilde{F}_n \phi_n(H_n^-) \subset [ -c_n, c_n ]_\Z \times [ -c_n, c_n ]_\R \times \Z_2 ,
\end{multline}
where $c_n = \widetilde{a}_n(1+2r_n(1-\frac{1}{n^2}))$,
and, similarly, $f'_{n+1} \in [ -c_n, c_n ]_\Z \times [ -c_n, c_n ]_\R \times \Z_2 $.

Given $g\in\Phi_n$, there are some uniquely determined $k\in K_n$ and $j\in J_n$ such that $g = k + 2\widetilde{a}_n j$, i.e. $(g,0,0) = (k,0,0) \phi_n(j,0)$.
Moreover, $(j,0) + h_n \in H_n$ since $h_n\in H_n^-$.
It also follows from (\ref{H_n_incl}) that
\begin{equation} \label{b_incl}
(k,0,0)f_n S_n S_n^{\pm 1} \subset F_n.
\end{equation}

Take $g\in\Phi_n$ and calculate $T_{(g,0,0)}x$.
$$
x = (f_n, c_{n+1}(h_n),\ldots) = (f_n c_{n+1}(h_n),\ldots) = (f_n s_n(h_n) \phi_n(h_n),\ldots).
$$
\begin{align*}
(g,0,0)f_n s_n(h_n) \phi_n(h_n)
&= (k,0,0) \phi_n(j,0) f_n s_n(h_n) \phi_n(h_n)\\
&= (k,0,0) f_n s_n(h_n) \phi_n((j,0)+h_n)\\
&= (k,0,0) f_n s_n(h_n) s_n((j,0)+h_n)^{-1} c_{n+1}((j,0)+h_n)\\
&= d c_{n+1}((j,0)+h_n),
\end{align*}
where $d := (k,0,0) f_n s_n(h_n) s_n((j,0)+h_n)^{-1} \in F_n$ by (\ref{b_incl}).
This means that $T_{(g,0,0)}x = (d,\ldots)\in X_n$.
Similarly,
$$
(g,0,0)f'_n, s_n(h'_n) \phi_n(h'_n) = d' c_{n+1}((j,0)+h'_n)
$$
with $d' := (b,0,0) f'_n s_n(h'_n) s_n((t,0)+h'_n)^{-1} \in F_n$.

Now take any $\xi_{n-2}$-measurable subsets $A^*,B^*\subset F_{n-2}$ and set $A:=A^*C_{n-1}C_n$, $B:=B^*C_{n-1}C_n$.
\begin{align*}
\nu([A^*]_{n-2}&\times[B^*]_{n-2}) = \nu([A]_{n}\times[B]_{n}) \\
&= \lim_{n\to\infty} \frac{\left|\{ g\in\Phi_n \mid T_{(g,0,0)}x\in[A]_n, T_{(g,0,0)}y\in[B]_n \}\right|}{|\Phi_n|} \\
&= \lim_{n\to\infty} \frac{\left|\{ g\in\Phi_n \mid d\in A, d'\in B \}\right|}{|\Phi_n|} \\
&= \lim_{n\to\infty} \frac{1}{|K_n|} \sum_{k\in K_n}  \frac{\left|\{ j\in J_n \mid d\in A, d'\in B \}\right|}{|J_n|} \\
&= \lim_{n\to\infty} \frac{1}{|K_n|} \sum_{k\in K_n} \zeta_n (A^{-1}(k,0,0)f_n s_n(h_n) \times B^{-1}(k,0,0)f'_n s_n(h'_n)),
\end{align*}
where $\zeta_n := \dist_{j\in J_n} (s_n((j,0)+h_n),s_n((j,0)+h'_n))$.
We consider separately two cases.

\textit{First case}.
Suppose first that $h_n\neq h'_n$ for infinitely many, say \emph{bad} $n$.
Since $|J_n|\geqslant \frac{r_n}{n^2}$ it follows from (\ref{dJlem}) that $\|\zeta_n - \kappa_{D_n}\times \kappa_{D_n}\|<\frac{1}{n}$.
Moreover, it follows from (\ref{b_incl}) and (\ref{discr_meas}) (we need (\ref{discr_meas}) for $A^{-1}(k,0,0)f_ns_n(h)$) that $$\kappa_{D_n}(A^{-1}(k,0,0)f_ns_n(h)) = \lambda_{S_n}(A^{-1}(k,0,0)f_ns_n(h)) + \overline{o}(1).$$
Hence
\begin{align*}
&\frac{1}{|K_n|} \sum_{k\in K_n} \zeta_n (A^{-1}(k,0,0)f_n s_n(h_n) \times B^{-1}(k,0,0)f'_n s_n(h'_n))\\
&= \frac{1}{|K_n|} \sum_{k\in K_n} \kappa_{D_n}(A^{-1}(k,0,0)f_n s_n(h_n)) \kappa_{D_n}(B^{-1}(k,0,0)f'_n s_n(h'_n)) + \overline{o}(1) \\
&= \frac{1}{|K_n|} \sum_{k\in K_n} \lambda_{S_n}(A^{-1}(k,0,0)f_n s_n(h_n)) \lambda_{S_n}(B^{-1}(k,0,0)f'_n s_n(h'_n)) + \overline{o}(1)
\end{align*}
Now we derive from Lemma~\ref{mainLem}(ii) that
\begin{multline*}
\lambda_{S_n}\left(A^{-1}(k,0,0)f_n s_n(h_n)\right)
= \frac{\lambda\left(A^{-1}(k,0,0)f_n s_n(h_n) \cap S_n\right)}{\lambda(S_n)} \\
= \frac{\lambda\left(A \cap (k,0,0)f_n s_n(h_n) S_n\right)}{\lambda(S_n)}
= \lambda_{F_{n-2}}(A^*) + \overline{o}(1)
\end{multline*}
and, in a similar way, $\lambda_{S_n}\left(B^{-1}(b,0,0)f'_n s_n(h'_n)\right) = \lambda_{F_{n-2}}(B^*) + \overline{o}(1)$.
Hence
$$
\nu([A^*]_{n-2}\times [B^*]_{n-2}) = \lambda_{F_{n-2}}(A^*)\lambda_{F_{n-2}}(B^*) + \overline{o}(1) = \mu([A^*]_{n-2})\mu([B^*]_{n-2}) + \overline{o}(1)
$$
for all bad $n$ and all $\xi_{n-2}$-measurable subsets $A^*,B^*\subset F_{n-2}$.
Since any measurable set can be approximated by $[A^*]_{n-2}$, it follows that in this case $\nu = \mu\times\mu$.

\textit{Second case}.
Now we consider the case where $h_n = h'_n$ for all $n$ greater than some $N$.
Then it is easy to see that $y = T_k x$, where $k=f'_N f^{-1}_N\in G$ and then it follows immediately that $(x,y)$ is generic for the off-diagonal joining $\mu_{T_k}$:
\begin{multline*}
\nu([A]_{n}\times[B]_{n})
= \lim_{n\to\infty} \frac{1}{|\Phi_n|} \sum_{i\in\Phi_n} \chi_{[A]_n} (T_{(i,0,0)}x) \chi_{[B]_n} (T_{(i,0,0)}T_k x) = \\
= \lim_{n\to\infty} \frac{1}{|\Phi_n|} \sum_{i\in\Phi_n} \chi_{[A]_n \cap T_k^{-1}[B]_n} (T_{(i,0,0)} x)
= \mu([A]_n \cap T_k^{-1}[B]_n) = \mu_{T_k}([A]_{n}\times[B]_{n})
\end{multline*}
for all $A,B\in\sigma(\xi_n)$, since $\nu$ projects onto $\mu$.
Since each measurable set can be approximated by cylinder sets, we deduce that in this case $\nu = \mu_{T_k}$ with $k\in G$.
\end{proof}

\begin{proof}[Proof of Theorem~\ref{MainTh}]
follows now from Veech's theorem, Propositions~\ref{mixingProp}, \ref{simpl} and the fact that $\fr F_{G_a}$ and $\fr F_{G_b}$ are isomorphic if and only if $G_a$ and $G_b$ are conjugate in $G$ \cite[Corollary~3.3]{dJR}.
It is clear, that $G_b = h G_a h^{-1}$ with $h=(0,\frac{a+b}{2},1)$.
\end{proof}

Notice that with some additional conditions on $s_n$ in Lemma~\ref{dJlemma} (cf. \cite[Lemma~2.3]{Da_fs}) one
can show that $T_{(1,0,0)}$ is actually mixing of all orders.

\section{Concluding remarks}

If we replace $G=\Z\times(\R\rtimes\Z_2)$ with $\Gamma:=\R\times(\R\rtimes\Z_2)$ and apply the same construction (with obvious minor changes) we obtain a probability preserving $\Gamma$-action $R$ such that the flow $(R_{(t,0,0)})_{t\in\R}$ is 2-fold simple mixing and its centralizer coincides with the entire $\Gamma$-action.
This gives an example of 2-fold simple mixing \emph{flow} with uncountably many prime factors.
By \cite{Ry}, each 2-fold simple flow is simple.
For the definitions of higher order simplicity we refer to \cite{dJR}.
Moreover, since $\Z\subset\R$ is a closed cocompact subgroup, the corresponding $\Z$-subaction is also 2-fold simple and $C(R_{(1,0,0)}) = \{R_g\mid g\in \Gamma\}$ by \cite[Theorem~6.1]{dJR}.
Thus we get examples of two nonisomorphic 2-fold simple transformations with uncountably many prime factors: $R_{(1,0,0)}$ is embeddable into a flow while $T_{(1,0,0)}$ is not.

\end{document}